\newtheorem{aaaaa}{DO NOT USE!}
\newtheorem{Corollary}[aaaaa]{Corollary}
\newtheorem{Definition}[aaaaa]{Definition}
\newtheorem{Example}[aaaaa]{Example}
\newtheorem{Lemma}[aaaaa]{Lemma}
\newtheorem{Proposition}[aaaaa]{Proposition}
\newtheorem{Remark}[aaaaa]{Remark}
\newtheorem{Theorem}[aaaaa]{Theorem}
\begin{document}
\begin{center}
{\Large{On the number of zeros of multiplicity $r$}}\\
\ \\
Olav Geil and Casper Thomsen\\
Department of Mathematical Sciences\\
Aalborg University\\
\ \\

Email: olav@math.aau.dk and caspert@math.aau.dk
\end{center}
\ \\
\begin{center}
\begin{minipage}{12cm}
{\small{{\textbf{Abstract:}} \ \\
Let $S$ be a finite subset of a field. For multivariate polynomials
the generalized Schwartz-Zippel bound~\cite{augot}, \cite{dvir}
estimates the number of zeros over $S \times \cdots \times S$ counted
with multiplicity. It does this in terms of the total degree, the
number of variables and $| S |$. In the present work we take into
account what is the leading monomial. This allows us to consider more
general point ensembles and most importantly it allows us to produce much
more detailed information about the number of zeros of multiplicity
$r$ than can be deduced from the generalized Schwartz-Zippel bound. We
present both upper and lower bounds. \\

\noindent {\textbf{Keywords:}} Multiplicity, multivariate polynomial,
Schwartz-Zippel bound, zeros of polynomial \\

\noindent {\textbf{MSC classifications:}} Primary: 12Y05. Secondary:
11T06, 12E05, 13P05, 26C99 }}

\end{minipage}
\end{center}
\ \\

\section{Introduction}\label{secintro}
In this paper we consider multivariate polynomials over an arbitrary
field ${\mathbf{F}}$. Our studies focus on the zeros of  given
prescribed multiplicity, a concept to be defined more formally below. 
The definition of multiplicity that we will use relies on the Hasse
derivative. This derivative 
coincides with the usual analytic
derivative in the case of polynomials over the reals. Before recalling the definition of the
Hasse derivative let us fix some notation. Assume we are given a vector of
variables $\vec{X}=(X_1, \ldots ,X_m)$ and a vector $\vec{k}=(k_1,
\ldots , k_m)\in {\mathbf{N}}_0^m$ then we will write
$\vec{X}^{\vec{k}}=X_1^{k_1} \cdots X_m^{k_m}$. We will always assume
that 
$\vec{X}$ and $\vec{Z}$ are vectors of $m$ variables. 
\begin{Definition}
Given $F(\vec{X})\in {\mathbf{F}}[\vec{X}]$ and
$\vec{k} \in {\mathbf{N}}_0^m$ the $\vec{k}$'th
Hasse derivative of $F$, denoted by $F^{(\vec{k})}(\vec{X})$ is the
coefficient of $\vec{Z}^{\vec{k}}$ in $F(\vec{X}+\vec{Z})$. In other words 
$$F(\vec{X}+\vec{Z})=\sum_{\vec{k}} F^{(\vec{k})}(\vec{X})\vec{Z}^{\vec{k}}.$$
\end{Definition}
The concept of multiplicity for univariate polynomials is generalized
to multivariate polynomials in the following way.
\begin{Definition}\label{defmult}
For $F(\vec{X}) \in {\mathbf{F}}[\vec{X}]\backslash \{ \vec{0} \}$ and
$\vec{a}\in {\mathbf{F}}^m$ we define the multiplicity of $F$ at $\vec{a}$
denoted by ${\mbox{mult}}(F,\vec{a})$ as follows. Let $M$ be an
integer such that for every $\vec{k}=(k_1, \ldots ,
k_m) \in {\mathbf{N}}_0^m$ with $k_1+\cdots +k_m < M$, $F^{(\vec{k})}(\vec{a})=0$
  holds, but for some  $\vec{k}=(k_1, \ldots ,
k_m) \in {\mathbf{N}}_0^m$ with $k_1+\cdots +k_m = M$,
$F^{(\vec{k})}(\vec{a})\neq 0$ holds, then 
${\mbox{mult}}(F,\vec{a})=M$. If $F=0$ then we define ${\mbox{mult}}(F,\vec{a})=\infty$.
\end{Definition}
It is of evident interest to investigate for multivariate  polynomials $F$
and a finite ensemble
of points the following questions:
\begin{itemize}
\item[Q1] How many zeros can $F$ have in total when counted with multiplicity?
\item[Q2] How many zeros of a given prescribed multiplicity can $F$ have?
\end{itemize}
Clearly, assuming finite ensembles of points is not a restriction
when $\mathbf{F}$ is a finite field
${\mathbf{F}}_q$. We note that the above questions have important implications in a number of applications, see~\cite{dvir}
and~\cite{pw_expanded}. What we would like to have for certain natural
ensembles of points is bounds on the
number of points in terms of the total degree of $F$ or even better in terms
of ${\mbox{lm}}(F)$. Here, ${\mbox{lm}}(F)$ denotes the leading
monomial of $F$ with respect to some fixed monomial
ordering.\\
The related problem of bounding the number of zeros (counted without multiplicity) has been
dealt with using two completely different approaches. On the one hand a tight bound in terms of the leading
monomial has be derived using the footprint bound from Gr\"{o}bner
basis theory (see~\cite{clo} and \cite{gh}). On the other hand a tight bound in term of the total degree, known
as the Schwartz-Zippel bound, was derived using only very simple
combinatorial arguments \cite{schwartz}, \cite{zippel}. To answer
partly 
question Q2  in
terms of the total degree Pellikaan and Wu in~\cite{pw_expanded} followed the footprint
bound approach. Later a generalized Schwartz-Zippel bound 
that  deals
with question Q1  in terms of the total degree was suggested by Augot,
El-Khamy, McEliece, Parvaresh, Stepanov, Vardy 
in~\cite{manyauthors} for the case of two variables, and by Augot,
Stepanov in \cite{augot} for arbitrarily many variables. The bound was
proven to be correct in a recent paper by  Dvir, Kopparty, Saraf and Sudan~\cite{dvir}. The generalized Schwartz-Zippel bound goes as follows.
\begin{Theorem}\label{SZ-lemma}
Let $F(\vec{X}) \in {\mathbf{F}}[\vec{X}]$ be a non-zero polynomial of
total degree $d$. Then for any finite set $S \subseteq {\mathbf{F}}$
\begin{eqnarray}
\sum_{\vec{a}\in S^n}{\mbox{mult}}(F,\vec{a}) \leq d | S |^{{m-1}}. \nonumber
\end{eqnarray}
\end{Theorem}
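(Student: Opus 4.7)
The plan is to prove the bound by nested induction, with the outer induction on the number of variables $m$ and an inner induction on the total degree $d$. The base of the outer induction is $m=1$, namely the classical univariate statement that a nonzero polynomial of degree $d$ has at most $d$ zeros counted with multiplicity; the base of the inner induction is $d=0$, where $F$ is a nonzero constant and the sum is $0$.

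Before the inductive step I would establish an elementary restriction lemma coming directly from Definition~\ref{defmult}. For $a_1\in\mathbf{F}$ set $H_{a_1}(X_2,\ldots,X_m):=F(a_1,X_2,\ldots,X_m)$. Putting $Z_1=0$ in the expansion $F(\vec{a}+\vec{Z})=\sum_{\vec{k}}F^{(\vec{k})}(\vec{a})\vec{Z}^{\vec{k}}$ identifies $F^{(0,\vec{k}')}(a_1,\vec{a}')$ with $H_{a_1}^{(\vec{k}')}(\vec{a}')$ for every $\vec{k}'\in\mathbf{N}_0^{m-1}$, so whenever $H_{a_1}\not\equiv 0$,
\begin{equation*}
{\mbox{mult}}(F,(a_1,\vec{a}'))\leq {\mbox{mult}}(H_{a_1},\vec{a}').
\end{equation*}

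For the inductive step ($m\geq 2$, $d\geq 1$) I would split according to whether there exists $a_1\in S$ with $H_{a_1}\equiv 0$. If such an $a_1$ exists then $X_1-a_1$ divides $F$, so writing $F=(X_1-a_1)F_1$ with $\deg F_1=d-1$, the multiplicativity ${\mbox{mult}}(F,\vec{a})={\mbox{mult}}(X_1-a_1,\vec{a})+{\mbox{mult}}(F_1,\vec{a})$ combined with the inner inductive hypothesis yields
\begin{equation*}
\sum_{\vec{a}\in S^m}{\mbox{mult}}(F,\vec{a})=|S|^{m-1}+\sum_{\vec{a}\in S^m}{\mbox{mult}}(F_1,\vec{a})\leq |S|^{m-1}+(d-1)|S|^{m-1}=d|S|^{m-1}.
\end{equation*}
Otherwise, for every $a_1\in S$ the restriction $H_{a_1}$ is a nonzero polynomial in $m-1$ variables of total degree at most $d$, so the outer inductive hypothesis gives $\sum_{\vec{a}'\in S^{m-1}}{\mbox{mult}}(H_{a_1},\vec{a}')\leq d|S|^{m-2}$. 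Combining with the restriction lemma and summing over $a_1\in S$ produces $\sum_{\vec{a}\in S^m}{\mbox{mult}}(F,\vec{a})\leq |S|\cdot d|S|^{m-2}=d|S|^{m-1}$.

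The main technical point is multiplicativity of Hasse multiplicity: for nonzero $G,H\in\mathbf{F}[\vec{X}]$ and any $\vec{a}\in\mathbf{F}^m$ one has ${\mbox{mult}}(GH,\vec{a})={\mbox{mult}}(G,\vec{a})+{\mbox{mult}}(H,\vec{a})$. This is where the argument needs some care; I would prove it by expanding $G(\vec{a}+\vec{Z})$ and $H(\vec{a}+\vec{Z})$ around $\vec{a}$, isolating their lowest-total-degree homogeneous parts $G_{\min}$ and $H_{\min}$, and using that $\mathbf{F}[\vec{Z}]$ is an integral domain so that $G_{\min}H_{\min}$ is nonzero of total degree equal to the sum of the two multiplicities. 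With this identity in hand, both branches of the case split close cleanly and the bound $d|S|^{m-1}$ follows.
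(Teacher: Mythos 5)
Your argument is correct, but it is not the route the paper takes: the paper states Theorem~\ref{SZ-lemma} as a quoted result of Dvir, Kopparty, Saraf and Sudan and effectively reproves it in sharpened form as Theorem~\ref{prop-sz-gen}, whose proof runs through the Hasse-derivative machinery of Lemma~\ref{lem5} and Corollary~\ref{cor8}: for each $a_m$ one takes the $(0,\ldots,0,r(a_m))$'th derivative, where $r(a_m)$ is the vanishing order at $a_m$ of the univariate coefficient $F_{i_1,\ldots,i_{m-1}}(X_m)$ of the lex-leading part, uses (\ref{eqnaeste}) to guarantee the restricted derivative is nonzero with controlled leading monomial, and closes a single induction on $m$ via $\sum_{a_m}r(a_m)\leq i_m$. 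You instead run a nested induction with a dichotomy on each slice $X_1=a_1$: if the restriction vanishes identically you split off the linear factor $X_1-a_1$ and invoke multiplicativity of ${\mbox{mult}}$ (correctly justified by multiplying lowest homogeneous parts of $G(\vec{a}+\vec{Z})$ and $H(\vec{a}+\vec{Z})$ in the integral domain ${\mathbf{F}}[\vec{Z}]$), and otherwise you use the elementary ``set $Z_1=0$'' identity, which is the special case of Proposition~\ref{propsammensat} you actually need. Both branches check out and the double induction is well founded, so the proof is complete. What the two approaches buy is different: yours is more elementary and self-contained, needing no chain rule for Hasse derivatives and no control of leading monomials; the paper's derivative-counting argument is what retains the information on how the degree is distributed among the variables, and that extra bookkeeping is precisely what yields the refinement to $i_1s_2\cdots s_m+\cdots+s_1\cdots s_{m-1}i_m$ over general products $S_1\times\cdots\times S_m$ and, later, the recursive bound $D$. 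Your case split would lose that refinement in the second branch, where only $\deg H_{a_1}\leq d$ survives.
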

As a corollary we get an immediate partial answer to question Q2 in
terms of the total degree of $F$.
\begin{Corollary}\label{corsz}
Let $F(\vec{X}) \in {\mathbf{F}}[\vec{X}]$ be a
non-zero polynomial of total degree $d$ and let $S \subseteq
{\mathbf{F}}$ be finite.
The number
of zeros of $F$ of multiplicity at least $r$ from $S^m$ is at most
$$\frac{d}{r}|S |^{m-1}.$$
\end{Corollary}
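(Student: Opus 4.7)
The plan is to derive the corollary as an immediate consequence of Theorem~\ref{SZ-lemma}. Let me denote by $N_r$ the number of points $\vec{a}\in S^m$ with ${\mbox{mult}}(F,\vec{a})\geq r$, which is the quantity we wish to bound.

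The first step is to relate $N_r$ to the weighted sum appearing in Theorem~\ref{SZ-lemma}. Since every point $\vec{a}\in S^m$ contributes a non-negative integer ${\mbox{mult}}(F,\vec{a})$ to the sum, and every point counted by $N_r$ contributes at least $r$, we obtain
\begin{equation*}
r\, N_r \;\leq\; \sum_{\vec{a}\in S^m} {\mbox{mult}}(F,\vec{a}).
\end{equation*}
This is the only non-trivial inequality in the argument; it rests on the trivial observation that dropping the points of multiplicity less than $r$ can only decrease the total, and on replacing each retained multiplicity by the lower bound $r$.

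The second step is simply to invoke Theorem~\ref{SZ-lemma}, which bounds the right-hand side by $d\,|S|^{m-1}$. Combining the two estimates and dividing by $r$ yields
\begin{equation*}
N_r \;\leq\; \frac{d}{r}\,|S|^{m-1},
\end{equation*}
which is the claimed bound.

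There is no real obstacle here; the proof is purely a matter of converting a weighted count (counting with multiplicity) into an unweighted count of points whose multiplicity exceeds a given threshold, via the elementary inequality above. No additional properties of the Hasse derivative or of the point ensemble $S^m$ are needed beyond what Theorem~\ref{SZ-lemma} already encapsulates.
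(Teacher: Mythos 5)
Your proof is correct and is exactly the intended argument: the paper states this as an immediate corollary of Theorem~\ref{SZ-lemma}, and the step $r\,N_r \leq \sum_{\vec{a}\in S^m}{\mbox{mult}}(F,\vec{a}) \leq d|S|^{m-1}$ is the standard way to pass from the weighted count to the threshold count. Nothing further is needed.
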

In the present paper we take the Schwartz-Zippel
approach. We use the methods from~\cite{dvir}, but rather than
taking into account only information about the total degree and
allowing only point ensembles $S^n$ we
\begin{itemize}
\item use information about the leading monomial with respect to a
  lexicographic ordering.
\item consider the more general point ensembles $S_1\times \cdots
  \times S_m$, the sets $S_i$ all being 
finite.
\end{itemize}
In Section~\ref{secleading} we easily translate Theorem~\ref{SZ-lemma} into this setting and
derive an immediate translation of Corollary~\ref{corsz}. As will be
shown in Section~\ref{seclin},  
Theorem~\ref{SZ-lemma} and its translation are
tight for all products of univariate linear
terms. A similar result by no means holds for
Corollary~\ref{corsz} and its translation. Actually, a refinement of the methods 
from~\cite{dvir} yields for dramatic improvements to
Corollary~\ref{corsz} and its translation. In its most general
form in Section~\ref{sec-cor-sz-gen-bet} we state 
an algorithm to upper bound the number of zeros of multiplicity at
least $r$. Using this algorithm we then derive in Section~\ref{sectwovar}
closed formulas in the case where the number of variables is two and
the multiplicity is arbitrary. Section~\ref{secismall} further presents a
simple closed formula for the case of arbitrary many variables where, however, the
powers $i_1, \ldots , i_m$ in the leading monomial
${\mbox{lm}}(F)=X_1^{i_1}\cdots X_m^{i_m}$ are all small. In
Section~\ref{seclin} we consider the case when the polynomial is a
product of univariate linear terms. Such polynomials are easy to
analyze and by doing this we get in appendix~\ref{applin} an algorithm to
produce lower bounds on the maximal attainable number of zeros of
multiplicity at least $r$. Section~\ref{secequal} describes various
conditions under which our upper bound equals our lower bound. Having improved on
the results in~\cite[Section 2]{dvir} we conclude the paper by showing 
in Appendix~\ref{seccomp} that
Corollary~\ref{corsz} is stronger than the corresponding result
given by Pellikaan and Wu in~\cite{pw_expanded}. From this we can conclude that the results found in the
present paper are the strongest known. The present paper comes with a
webpage~\cite{hmpage} where a large number of experimental results
are presented.
\section{Using information about the leading monomial}\label{secleading}
In the following we modify the method from~\cite[Section 2]{dvir}. One could choose to prove the results of the present
section using the original method, however, the modification will
be needed in the section to follow. For simplicity we stick to the
modified method in both sections. Throughout the paper
$S_1,\ldots, S_m \subseteq {\mathbf{F}}$ are finite sets and we write $s_1=|S_1 | , \ldots , s_m=|S_m|$. In the following the monomial
ordering $\prec$ on the set of monomials in variables $X_1,
\ldots , X_m$ will always be the lexicographic ordering with $X_m\prec \cdots \prec X_1$.\\

We start our investigations by recalling two results from~\cite[Section
2]{dvir}. The first corresponds to~\cite[Lemma 5]{dvir}.
\begin{Lemma}\label{lem5}
Consider $F(\vec{X}) \in {\mathbf{F}}[\vec{X}]$ and
$\vec{a} \in {\mathbf{F}}^m$. For any $\vec{k}=(k_1, \ldots ,k_m)
\in {\mathbf{N}}_0^m$ we have 
$${\mbox{mult}}(F^{(\vec{k})},\vec{a}) \geq mult(F,\vec{a})-(k_1+ \cdots +k_m).$$
\end{Lemma}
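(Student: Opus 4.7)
The plan is to establish and then exploit the composition formula for iterated Hasse derivatives, namely
\[
\bigl(F^{(\vec{k})}\bigr)^{(\vec{j})}(\vec{X}) \;=\; \binom{\vec{k}+\vec{j}}{\vec{k}}\, F^{(\vec{k}+\vec{j})}(\vec{X}),
\]
where $\binom{\vec{k}+\vec{j}}{\vec{k}} = \prod_{i=1}^m \binom{k_i+j_i}{k_i}$. Once this identity is in hand, the bound follows in a single line: set $M = \mathrm{mult}(F,\vec{a})$, take any $\vec{j}$ with $j_1+\cdots+j_m < M - (k_1+\cdots+k_m)$, and observe that then $(k_1+j_1)+\cdots+(k_m+j_m) < M$, so by Definition~\ref{defmult} we have $F^{(\vec{k}+\vec{j})}(\vec{a})=0$, and the identity forces $(F^{(\vec{k})})^{(\vec{j})}(\vec{a})=0$. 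This shows $\mathrm{mult}(F^{(\vec{k})},\vec{a}) \geq M - (k_1+\cdots+k_m)$, as required. The edge case in which $F^{(\vec{k})}$ is the zero polynomial is handled by the convention $\mathrm{mult}(0,\vec{a}) = \infty$, and the case $M \leq k_1+\cdots+k_m$ is trivial because multiplicities are non-negative.

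To derive the composition identity I would introduce two fresh vectors of variables $\vec{Y}$ and $\vec{Z}$ and expand $F(\vec{X}+\vec{Y}+\vec{Z})$ in two ways. On one hand, grouping $\vec{X}+\vec{Y}$ together and using the defining expansion in $\vec{Z}$ gives
\[
F(\vec{X}+\vec{Y}+\vec{Z}) \;=\; \sum_{\vec{k}} F^{(\vec{k})}(\vec{X}+\vec{Y})\, \vec{Z}^{\vec{k}}.
\]
On the other hand, expanding first in $\vec{Y}+\vec{Z}$ about $\vec{X}$ and then using the binomial theorem coordinate-wise to re-expand $(\vec{Y}+\vec{Z})^{\vec{l}}$ yields
\[
F(\vec{X}+\vec{Y}+\vec{Z}) \;=\; \sum_{\vec{l}} F^{(\vec{l})}(\vec{X}) \sum_{\vec{k}\le\vec{l}} \binom{\vec{l}}{\vec{k}}\vec{Y}^{\vec{l}-\vec{k}}\vec{Z}^{\vec{k}}.
\]
Comparing coefficients of $\vec{Z}^{\vec{k}}$ produces
$F^{(\vec{k})}(\vec{X}+\vec{Y}) = \sum_{\vec{l}\ge\vec{k}} \binom{\vec{l}}{\vec{k}} F^{(\vec{l})}(\vec{X})\,\vec{Y}^{\vec{l}-\vec{k}}$, and then comparing the coefficient of $\vec{Y}^{\vec{j}}$ on both sides (using the Hasse-derivative definition on the left) yields the claimed identity.

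I anticipate that the only genuinely delicate step is the bookkeeping in the double expansion, in particular getting the binomial factors right across the $m$ coordinates; everything else is a direct unwinding of Definition~\ref{defmult}. Note that in positive characteristic the binomial coefficient $\binom{\vec{k}+\vec{j}}{\vec{k}}$ can vanish, but this only makes the identity say $0=0$ and does not affect the implication we need, so the characteristic assumption plays no role in the proof.
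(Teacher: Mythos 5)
Your argument is correct. Note that the paper states Lemma~\ref{lem5} without proof, simply recalling it from~\cite{dvir}; your derivation --- establishing the composition identity $(F^{(\vec{k})})^{(\vec{j})}=\binom{\vec{k}+\vec{j}}{\vec{k}}F^{(\vec{k}+\vec{j})}$ via the double expansion of $F(\vec{X}+\vec{Y}+\vec{Z})$ and then observing that every Hasse derivative of $F^{(\vec{k})}$ of order below $\mathrm{mult}(F,\vec{a})-(k_1+\cdots+k_m)$ vanishes at $\vec{a}$ --- is exactly the standard proof given in that reference, and your closing remark that a vanishing binomial coefficient in positive characteristic only strengthens the needed implication is accurate.
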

The next result that we recall corresponds to the last part of
\cite[Proposition 6]{dvir}. 
\begin{Proposition}\label{propsammensat}
 Given $F(X_1, \ldots , X_m) \in {\mathbf{F}}[X_1, \ldots
 ,X_m]$ and
$$Q(Y_1, \ldots , Y_l)=(Q_1(\vec{Y}), \ldots  ,Q_m(\vec{Y}))
\in {\mathbf{F}}[Y_1, \ldots , Y_l]^m$$ let $F \circ Q$ be the
polynomial $F(Q_1(\vec{Y}), \ldots , Q_m(\vec{Y}))$.  For any
$\vec{a}\in {\mathbf{F}}^l$ we have $${\mbox{mult}}(F \circ Q,\vec{a})
\geq {\mbox{mult}}(F,Q(\vec{a})).$$
\end{Proposition}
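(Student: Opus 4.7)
The plan is to reduce the statement to a computation involving the defining identity of the Hasse derivative applied to a carefully chosen substitution. Let $M=\text{mult}(F,Q(\vec{a}))$; by Definition~\ref{defmult} the task is to show that every coefficient of $\vec{Z}^{\vec{j}}$ in $(F\circ Q)(\vec{a}+\vec{Z})$ with $|\vec{j}|<M$ is zero, where $|\vec{j}|=j_1+\cdots+j_l$. Equivalently, I will show that $(F\circ Q)(\vec{a}+\vec{Z})$, viewed as a polynomial in $\vec{Z}$, has no monomial of total $\vec{Z}$-degree strictly less than $M$.

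The key move is to start from the defining identity
\[F(\vec{X}+\vec{W}) \;=\; \sum_{\vec{k}}F^{(\vec{k})}(\vec{X})\,\vec{W}^{\vec{k}}\]
in the polynomial ring $\mathbf{F}[\vec{X},\vec{W}]$, and substitute $\vec{X}=Q(\vec{a})$ together with the polynomial vector $\vec{W}=Q(\vec{a}+\vec{Z})-Q(\vec{a})$, which lives in $\mathbf{F}[\vec{Z}]^m$. Because the substituted $\vec{X}+\vec{W}$ equals $Q(\vec{a}+\vec{Z})$, the left-hand side becomes $(F\circ Q)(\vec{a}+\vec{Z})$, giving
\[(F\circ Q)(\vec{a}+\vec{Z}) \;=\; \sum_{\vec{k}} F^{(\vec{k})}(Q(\vec{a}))\,\bigl(Q(\vec{a}+\vec{Z})-Q(\vec{a})\bigr)^{\vec{k}}.\]

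Next I would exploit two independent facts about this expansion. First, by the definition of $M$ the scalar coefficient $F^{(\vec{k})}(Q(\vec{a}))$ vanishes whenever $|\vec{k}|<M$, so only terms with $|\vec{k}|\ge M$ contribute. Second, each component $Q_i(\vec{a}+\vec{Z})-Q_i(\vec{a})$ has zero constant term in $\vec{Z}$, hence every monomial appearing in it has total $\vec{Z}$-degree at least $1$; multiplicativity of the minimum-degree then forces every monomial of $\bigl(Q(\vec{a}+\vec{Z})-Q(\vec{a})\bigr)^{\vec{k}}=\prod_{i=1}^m\bigl(Q_i(\vec{a}+\vec{Z})-Q_i(\vec{a})\bigr)^{k_i}$ to have total $\vec{Z}$-degree at least $k_1+\cdots+k_m=|\vec{k}|$. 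Combining the two observations, every monomial of the right-hand sum has $\vec{Z}$-degree at least $M$, which is the desired conclusion.

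The only delicate point, and the step I would write most carefully, is the legitimacy of the substitution $\vec{X}\mapsto Q(\vec{a})$, $\vec{W}\mapsto Q(\vec{a}+\vec{Z})-Q(\vec{a})$: the identity we start with is a polynomial identity in $\mathbf{F}[\vec{X},\vec{W}]$, so the substitution is simply the $\mathbf{F}$-algebra homomorphism $\mathbf{F}[\vec{X},\vec{W}]\to\mathbf{F}[\vec{Z}]$ sending $\vec{X},\vec{W}$ to the prescribed elements, and both sides transform as claimed. Everything else is bookkeeping on degrees, so I do not anticipate further obstacles.
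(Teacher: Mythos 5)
Your proof is correct. The paper itself gives no proof of this proposition---it is recalled from \cite{dvir} (the last part of Proposition 6 there)---and your substitution argument, expanding $F(\vec{X}+\vec{W})=\sum_{\vec{k}}F^{(\vec{k})}(\vec{X})\vec{W}^{\vec{k}}$ and specializing $\vec{X}=Q(\vec{a})$, $\vec{W}=Q(\vec{a}+\vec{Z})-Q(\vec{a})$ so that every surviving term has $\vec{Z}$-degree at least $M$, is exactly the standard argument used in that reference; the only point worth adding is the trivial case $F=0$, where both multiplicities are $\infty$ by convention.
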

We get the following Corollary, which is closely related
to~\cite[Corollary 7]{dvir}. 
\begin{Corollary}\label{cor8}
Let $F(X_1, \ldots , X_m) \in {\mathbf{F}}[X_1, \ldots , X_m]$ and
$
\vec{b}_1, \ldots  ,\vec{b}_{m-1},\vec{c} \in {\mathbf{F}}^m$ be
given. Write $F^\ast(T_1, \ldots  ,T_{m-1}) =F(T_1\vec{b}_1 + \cdots +
T_{m-1}\vec{b}_{m-1}+\vec{c})$.
For any $(t_1, \ldots , t_{m-1}) \in {\mathbf{F}}^{m-1}$ we
have
\begin{multline*}
{\mbox{mult}}(F^\ast(T_1, \ldots , T_{m-1}),(t_1, \ldots , t_{m-1})) \\
\geq{\mbox{mult}}(F(X_1, \ldots ,X_m),
t_1\vec{b}_1+\cdots +t_{m-1}\vec{b}_{m-1}+\vec{c}).
\end{multline*}
\end{Corollary}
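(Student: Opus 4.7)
The plan is to recognize Corollary~\ref{cor8} as an immediate specialization of Proposition~\ref{propsammensat} to the case where the coordinate maps $Q_1,\ldots,Q_m$ are affine linear in $l=m-1$ variables. So the work is almost entirely setting up notation; no derivative calculations are needed because Proposition~\ref{propsammensat} already encapsulates the chain-rule-like inequality for Hasse multiplicities.

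Concretely, I would write $\vec{b}_j = ((b_j)_1,\ldots,(b_j)_m)$ and $\vec{c} = (c_1,\ldots,c_m)$, and then for each $i \in \{1,\ldots,m\}$ define the affine polynomial
$$Q_i(Y_1,\ldots,Y_{m-1}) \;=\; (b_1)_i\, Y_1 + \cdots + (b_{m-1})_i\, Y_{m-1} + c_i \;\in\; \mathbf{F}[Y_1,\ldots,Y_{m-1}].$$
Put $Q = (Q_1,\ldots,Q_m)$, which is an element of $\mathbf{F}[Y_1,\ldots,Y_{m-1}]^m$, i.e.\ exactly the type of object to which Proposition~\ref{propsammensat} applies (with $l=m-1$). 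By construction, substituting $Y_j = T_j$ gives
$$F\!\left(Q_1(\vec{T}),\ldots,Q_m(\vec{T})\right) \;=\; F\!\left(T_1\vec{b}_1 + \cdots + T_{m-1}\vec{b}_{m-1} + \vec{c}\right) \;=\; F^\ast(T_1,\ldots,T_{m-1}),$$
so $F^\ast = F \circ Q$ in the notation of the proposition.

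Now I would evaluate $Q$ at $\vec{a} = (t_1,\ldots,t_{m-1}) \in \mathbf{F}^{m-1}$. The $i$-th component is
$$Q_i(\vec{a}) = (b_1)_i t_1 + \cdots + (b_{m-1})_i t_{m-1} + c_i,$$
so $Q(\vec{a}) = t_1\vec{b}_1 + \cdots + t_{m-1}\vec{b}_{m-1} + \vec{c}$. Applying Proposition~\ref{propsammensat} directly yields
$$\mathrm{mult}(F\circ Q,\vec{a}) \;\geq\; \mathrm{mult}(F,Q(\vec{a})),$$
which, after substituting the two identifications above, is exactly the inequality asserted in Corollary~\ref{cor8}.

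There is essentially no obstacle; the only thing to be careful about is bookkeeping of indices when expanding $T_1\vec{b}_1+\cdots+T_{m-1}\vec{b}_{m-1}+\vec{c}$ coordinate-by-coordinate to exhibit $F^\ast$ as an honest composition $F\circ Q$ with polynomial $Q_i$'s. Once that identification is made, Proposition~\ref{propsammensat} does all the work, and no direct manipulation of Hasse derivatives is required.
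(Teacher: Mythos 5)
Your proof is correct and is exactly the argument the paper intends: Corollary~\ref{cor8} is stated as an immediate consequence of Proposition~\ref{propsammensat}, obtained by taking $l=m-1$ and letting $Q$ be the affine map with components $Q_i(\vec{T})=(b_1)_iT_1+\cdots+(b_{m-1})_iT_{m-1}+c_i$. Your bookkeeping of the identification $F^\ast=F\circ Q$ and of $Q(t_1,\ldots,t_{m-1})=t_1\vec{b}_1+\cdots+t_{m-1}\vec{b}_{m-1}+\vec{c}$ is precisely what is needed.
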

We now write
$$F(X_1, \ldots , X_m)=\sum_{j_1, \ldots  ,j_{m-1}} 
X_1^{j_1}\cdots X_{m-1}^{j_{m-1}}F_{j_1, \ldots
  j_{m-1}} (X_m).$$
Let $X_1^{i_1} \cdots X_m^{i_m}$ be the leading monomial of $F$ with
respect to $\prec$. Then
due to the definition of $\prec$, $F_{i_1, \ldots , i_{m-1}}(X_m)$ is a (univariate) polynomial
of degree $i_m$.
For $a_m \in {\mathbf{F}}$  define  
$$
r(a_m)={\mbox{mult}}(F_{i_1, \ldots , i_{m-1}}(X_m),a_m).$$
Clearly, 
\begin{equation}
\sum_{a_m \in S_m}r(a_m) \leq i_m. \label{eqfoerste}
\end{equation}
We have 
$$
F^{(0,\ldots , 0,r(a_m))}(X_1, \ldots , X_m)=\sum_{j_1, \ldots ,
  j_{m-1}}
X_1^{j_1} \cdots X_{m-1}^{j_{m-1}}F_{j_1, \ldots
  ,j_{m-1}}^{(r(a_m))}(X_m)$$
and due to the definition of $\prec$ and to the definition of
$r(a_m)$ we have
\begin{equation}
{\mbox{lm}}_{\prec}(F^{(0,\ldots , 0,r(a_m))}(X_1, \ldots ,
X_{m-1},a_m))=X_1^{i_1}\cdots X_{m-1}^{i_{m-1}}. \label{eqnaeste}
\end{equation}
Applying first Lemma~\ref{lem5} with $\vec{k}=(0,\ldots ,0,r(a_m))$ and 
afterwards   
Corollary~\ref{cor8} with 
$\vec{b}_1=(1,0,\ldots ,0), \ldots ,
\vec{b}_{m-1}=(0, \ldots , 0,1,0)$, $\vec{c}=(0, \ldots , 0,a_m)$ and $t_1=a_1, \ldots ,
t_{m-1}=a_{m-1}$ we get the following result which is
closely related to a result in~\cite[Proof of Lemma 8]{dvir}:
\begin{align}
&{\mbox{mult}}\big(F(X_1, \ldots  ,X_m),(a_1, \ldots , a_m)\big)\nonumber \\
&\leq (0+ \cdots +0+r(a_m))+{\mbox{mult}}\big(F^{(0,\ldots
  ,0,r(a_m))}(X_1, \ldots ,X_m),(a_1, \ldots , a_m)\big)\nonumber\\
&\leq r(a_m) +{\mbox{mult}}\big(F^{(0,\ldots  ,0,r(a_m))}(X_1, \ldots
,X_{m-1},a_m),(a_1, \ldots  ,a_{m-1})\big). \label{eqtredie}
\end{align} 
We are now in the position that we can prove the main result of this section.
\begin{Theorem}\label{prop-sz-gen}
Let $F(\vec{X}) \in {\mathbf{F}}[\vec{X}]$ be a non-zero polynomial and
let ${\mbox{lm}}(F)=X_1^{i_1} \cdots X_m^{i_m}$ be its leading
monomial with respect to a lexicographic ordering. Then for
any finite sets $S_1, \ldots ,S_m \subseteq {\mathbf{F}}$
\begin{eqnarray}
\sum_{\vec{a}\in S_1 \times \cdots \times S_m}{\mbox{mult}}(F,\vec{a})
\leq i_1s_2\cdots s_m+s_1i_2s_3 \cdots s_m+\cdots +s_1\cdots s_{m-1}i_m.\nonumber
\end{eqnarray}
\end{Theorem}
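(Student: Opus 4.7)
The plan is to prove the theorem by induction on the number of variables $m$, using the inequality (\ref{eqtredie}) as the engine of the induction step together with the bound (\ref{eqfoerste}).

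For the base case $m=1$, the leading monomial is $X_1^{i_1}$, so $\deg F = i_1$, and the inequality reduces to the classical fact that a nonzero univariate polynomial has at most $\deg F$ zeros in $S_1$ counted with multiplicity. This follows directly from the definition of multiplicity via the Hasse derivative (equivalently, from the factorization of $F$ into linear factors over an algebraic closure).

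For the inductive step, assume the theorem holds for polynomials in $m-1$ variables, and let $F(X_1,\ldots,X_m)$ with leading monomial $X_1^{i_1}\cdots X_m^{i_m}$ be given. For each $a_m\in S_m$ set $r(a_m)$ as in the excerpt. The key observation (\ref{eqnaeste}) tells us that the $(m-1)$-variable polynomial
$$G_{a_m}(X_1,\ldots,X_{m-1}) := F^{(0,\ldots,0,r(a_m))}(X_1,\ldots,X_{m-1},a_m)$$
has leading monomial $X_1^{i_1}\cdots X_{m-1}^{i_{m-1}}$ with respect to the induced lexicographic ordering, so in particular it is nonzero and the inductive hypothesis applies to it, giving
$$\sum_{(a_1,\ldots,a_{m-1})\in S_1\times\cdots\times S_{m-1}} {\mbox{mult}}(G_{a_m},(a_1,\ldots,a_{m-1})) \leq i_1 s_2\cdots s_{m-1} + \cdots + s_1\cdots s_{m-2} i_{m-1}.$$

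Now I sum the pointwise bound (\ref{eqtredie}) over all $\vec{a}\in S_1\times\cdots\times S_m$. The first summand $r(a_m)$ does not depend on $(a_1,\ldots,a_{m-1})$, so summing it contributes $s_1\cdots s_{m-1}\sum_{a_m\in S_m} r(a_m) \leq s_1\cdots s_{m-1} i_m$ by (\ref{eqfoerste}). Summing the second term, applying the inductive bound for each fixed $a_m$ and then summing over $a_m\in S_m$, gives at most $s_m\bigl(i_1 s_2\cdots s_{m-1} + \cdots + s_1\cdots s_{m-2} i_{m-1}\bigr)$. Adding the two contributions yields exactly $i_1 s_2\cdots s_m + s_1 i_2 s_3\cdots s_m + \cdots + s_1\cdots s_{m-1} i_m$, completing the induction.

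There is no real obstacle here beyond bookkeeping: all three ingredients (Lemma~\ref{lem5}, Corollary~\ref{cor8}, and the preservation of the leading monomial after taking the $(0,\ldots,0,r(a_m))$-th Hasse derivative and substituting $X_m=a_m$) have been assembled in the paragraphs preceding the theorem. The one conceptual point worth emphasising is that the lexicographic order with $X_m\prec\cdots\prec X_1$ is precisely what guarantees (\ref{eqnaeste}): the substitution $X_m\mapsto a_m$ can only kill the coefficient of $X_1^{i_1}\cdots X_{m-1}^{i_{m-1}}$ if $F_{i_1,\ldots,i_{m-1}}^{(r(a_m))}(a_m)=0$, which is excluded by the very choice of $r(a_m)$ as the multiplicity of $a_m$ as a root of $F_{i_1,\ldots,i_{m-1}}$.
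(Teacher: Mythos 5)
Your proof is correct and follows essentially the same route as the paper's: induction on $m$, summing the pointwise bound (\ref{eqtredie}) over $S_1\times\cdots\times S_m$ and invoking (\ref{eqfoerste}) for the $r(a_m)$ contribution and (\ref{eqnaeste}) to justify applying the inductive hypothesis to $F^{(0,\ldots,0,r(a_m))}(X_1,\ldots,X_{m-1},a_m)$. You have merely written out the bookkeeping that the paper leaves implicit.
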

\begin{proof}
We prove the theorem for the monomial
ordering $\prec$. Dealing with general 
lexicographic orderings is simply a question of relabeling the
variables. Clearly the theorem holds for
$m=1$. For $m>1$ we consider~(\ref{eqtredie}). Assuming the
theorem holds when the number of variables is smaller than $m$ we
get by applying~(\ref{eqfoerste}) and~(\ref{eqnaeste}) the following estimate
\begin{align}
&\sum_{\vec{a} \in S_1 \times \cdots \times
  S_m}{\mbox{mult}}(F,\vec{a}) \nonumber \\
&\leq i_ms_1 \cdots s_{m-1}+s_m(i_1s_2
\cdots s_{m-1}+\cdots +i_{m-1}s_1\cdots s_{m-2})\nonumber \\
&=i_1s_2\cdots s_m+i_2s_1 s_3\cdots s_m+\cdots i_ms_1\cdots
s_{m-1}\nonumber
\end{align}
as required.
\end{proof}

We have the following immediate generalization of Corollary~\ref{corsz}.
\begin{Corollary}\label{cor-sz-gen}
Let $F(\vec{X}) \in {\mathbf{F}}[\vec{X}]$ be a non-zero polynomial and
let ${\mbox{lm}}(F)=X_1^{i_1} \cdots X_m^{i_m}$ be its leading
monomial with respect to a lexicographic ordering. Assume $S_1, \ldots
,S_m \subseteq {\mathbf{F}}$ are finite sets. 
Then over
$S_1 \times \cdots \times S_m $ the number
of zeros of multiplicity at least $r$ is less than or equal to 
\begin{eqnarray}
\big( i_1s_2\cdots s_m+s_1i_2s_3\cdots s_m+\cdots +s_1\cdots
s_{m-1}i_m \big) /r.\nonumber
\end{eqnarray}
\end{Corollary}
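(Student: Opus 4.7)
The plan is to derive this corollary directly from Theorem~\ref{prop-sz-gen} by a simple averaging/counting argument, in complete analogy with how Corollary~\ref{corsz} follows from Theorem~\ref{SZ-lemma}. Let $N$ denote the number of points $\vec{a} \in S_1 \times \cdots \times S_m$ at which ${\mbox{mult}}(F,\vec{a}) \geq r$. Since multiplicity is always a nonnegative integer (it is nonnegative by Definition~\ref{defmult}, and finite for nonzero $F$), each such point contributes at least $r$ to the sum $\sum_{\vec{a}} {\mbox{mult}}(F,\vec{a})$, while the remaining points contribute a nonnegative amount.

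Concretely, I would write
\begin{eqnarray*}
rN \;\leq\; \sum_{\substack{\vec{a} \in S_1 \times \cdots \times S_m \\ {\mbox{\scriptsize mult}}(F,\vec{a}) \geq r}} {\mbox{mult}}(F,\vec{a}) \;\leq\; \sum_{\vec{a} \in S_1 \times \cdots \times S_m} {\mbox{mult}}(F,\vec{a}),
\end{eqnarray*}
and then invoke Theorem~\ref{prop-sz-gen} to bound the right-hand side by $i_1 s_2 \cdots s_m + s_1 i_2 s_3 \cdots s_m + \cdots + s_1 \cdots s_{m-1} i_m$. Dividing both sides by $r$ yields the stated bound on $N$.

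There is essentially no obstacle here: all of the substantive work has already been done in Theorem~\ref{prop-sz-gen}, and the present corollary is a trivial consequence of the fact that a sum of nonnegative terms, each of size at least $r$ whenever counted, is at least $r$ times the count. The only thing to be a little careful about is ensuring that the polynomial $F$ is nonzero (so that multiplicities are finite and Theorem~\ref{prop-sz-gen} applies), which is already part of the hypothesis.
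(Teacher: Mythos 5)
Your proof is correct and follows exactly the route the paper intends: the corollary is stated as an ``immediate'' consequence of Theorem~\ref{prop-sz-gen}, obtained by the same averaging argument (each point of multiplicity at least $r$ contributes at least $r$ to the nonnegative sum) that derives Corollary~\ref{corsz} from Theorem~\ref{SZ-lemma}. Nothing is missing.
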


{\section{Improvements to Corollary~\ref{cor-sz-gen} \label{sec-cor-sz-gen-bet}
}}
In this section we shall see that a further analysis allows for
dramatic improvements to Corollary~\ref{cor-sz-gen}.
Let $X_1^{i_1}\cdots X_m^{i_m}$ be the leading monomial of $F(\vec{X})\in {\mathbf{F}}[\vec{X}]$  with respect
to $\prec$. Recall from~(\ref{eqtredie}) the bound
\begin{align}
&{\mbox{mult}}(F(X_1, \ldots  ,X_m),(a_1, \ldots , a_m))\nonumber \\
& \leq r(a_m) +{\mbox{mult}}(F^{(0,\ldots  ,0,r(a_m))}(X_1, \ldots
,X_{m-1},a_m),(a_1, \ldots  ,a_{m-1})).\label{eqnoget}
\end{align}
Here, $r(a_m)$ are numbers that when summed over all possible $a_m \in S_m$
give at most $i_m$ and the leading monomial of $F^{(0,\ldots  ,0,r(a_m))}(X_1, \ldots
,X_{m-1},a_m)$ with respect to $\prec$ is $X^{i_1} \cdots
X_{m-1}^{i_{m-1}}$. Our analysis suggests the
following recursive definition of a function to bound the number of
zeros of multiplicity $r$.\\

\begin{Definition}\label{defD}
Let $r \in {\mathbf{N}}, i_1, \ldots , i_m \in {\mathbf{N}}_0$. Define 
$$D(i_1,r,s_1)=\min \big\{\big\lfloor \frac{i_1}{r} \big\rfloor,s_1\big\}$$
and for $m \geq 2$
\begin{multline*}
D(i_1, \ldots , i_m,r,s_1, \ldots ,s_m)=
\\
\begin{split}
\max_{(u_1, \ldots  ,u_r)\in A(i_m,r,s_m) }&\bigg\{ (s_m-u_1-\cdots -u_r)D(i_1,\ldots ,i_{m-1},r,s_1,
\ldots ,s_{m-1})\\
&\quad+u_1D(i_1, \ldots , i_{m-1},r-1,s_1, \ldots ,s_{m-1})+\cdots
\\
&\quad +u_{r-1}D(i_1, \ldots ,i_{m-1},1,s_1, \ldots , s_{m-1})+u_rs_1\cdots
s_{m-1} \bigg\}
\end{split}
\end{multline*}
where 
\begin{multline}
A(i_m,r,s_m)= \nonumber \\
\{ (u_1, \ldots , u_r) \in {\mathbf{N}}_0^r \mid u_1+ \cdots
+u_r \leq s_m {\mbox{ \ and \ }} u_1+2u_2+\cdots +ru_r \leq i_m\}.\nonumber
\end{multline}
\end{Definition}
\begin{Theorem}\label{prorec}
For a polynomial $F(\vec{X})\in {\mathbf{F}}[\vec{X}]$ let $X_1^{i_1}\cdots X_m^{i_m}$ be its leading monomial with
respect to $\prec$ (this is the lexicographic ordering with $X_m\prec \cdots \prec X_1$). Then $F$ has at most $D(i_1, \ldots , i_m,r,s_1,
\ldots ,s_m)$ zeros of multiplicity at least $r$ in $S_1\times \cdots
\times S_m$. The corresponding recursive algorithm produces a number
that is at most equal to the number found in
Corollary~\ref{cor-sz-gen} and is at most equal to $s_1 \cdots s_m$.
\end{Theorem}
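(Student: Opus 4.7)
The plan is to prove the three claims by induction on the number of variables $m$, with the main upper bound being one induction and the two comparison inequalities being a second induction that treats the recursive formula in Definition~\ref{defD} directly, making no reference to the polynomial $F$.

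For the base case $m=1$, the polynomial $F$ is univariate of degree $i_1$, so a zero of multiplicity at least $r$ contributes at least $r$ to the degree, giving $\lfloor i_1/r\rfloor$ as an upper bound; the count is also trivially bounded by $s_1$. Hence $D(i_1,r,s_1)=\min\{\lfloor i_1/r\rfloor,s_1\}$ is a valid upper bound, and visibly $\leq i_1/r$ and $\leq s_1$, matching both comparison targets. For the inductive step of claim (1), the key ingredients are (\ref{eqnoget}), the identification (\ref{eqnaeste}) of the leading monomial of $F^{(0,\ldots,0,r(a_m))}(X_1,\ldots,X_{m-1},a_m)$ as $X_1^{i_1}\cdots X_{m-1}^{i_{m-1}}$, and (\ref{eqfoerste}) bounding $\sum_{a_m\in S_m}r(a_m)\leq i_m$. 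I would partition $S_m$ according to $r(a_m)$: set $u_j=\#\{a_m\in S_m\mid r(a_m)=j\}$ for $1\leq j\leq r-1$ and $u_r=\#\{a_m\in S_m\mid r(a_m)\geq r\}$, so that $(u_1,\ldots,u_r)\in A(i_m,r,s_m)$. For each $a_m$ with $r(a_m)=j<r$, (\ref{eqnoget}) forces any zero of $F$ at $(a_1,\ldots,a_m)$ of multiplicity $\geq r$ to correspond to a zero of $F^{(0,\ldots,0,j)}(X_1,\ldots,X_{m-1},a_m)$ at $(a_1,\ldots,a_{m-1})$ of multiplicity $\geq r-j$, and the inductive hypothesis bounds the number of such $(a_1,\ldots,a_{m-1})$ by $D(i_1,\ldots,i_{m-1},r-j,s_1,\ldots,s_{m-1})$. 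For $a_m$ with $r(a_m)\geq r$ there is no constraint beyond $(a_1,\ldots,a_{m-1})\in S_1\times\cdots\times S_{m-1}$, giving at most $s_1\cdots s_{m-1}$ candidates. Summing over $a_m$ and then maximizing over $(u_1,\ldots,u_r)\in A(i_m,r,s_m)$ reproduces Definition~\ref{defD} exactly, establishing claim (1).

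The two comparison claims fall out of a single algebraic observation applied termwise to the recursion. For $0\leq j\leq r-1$,
\[
r\cdot D(i_1,\ldots,i_{m-1},r-j,s_1,\ldots,s_{m-1}) = (r-j)\,D(\ldots,r-j,\ldots) + j\,D(\ldots,r-j,\ldots),
\]
so applying the inductive hypothesis to both summands, with $C:=i_1 s_2\cdots s_{m-1}+\cdots+s_1\cdots s_{m-2}\,i_{m-1}$, gives
\[
r\cdot D(\ldots,r-j,\ldots) \leq C + j\,s_1\cdots s_{m-1}.
\]
Introducing the auxiliary index $u_0:=s_m-u_1-\cdots-u_r$ and the convention $D(\ldots,0,\ldots):=s_1\cdots s_{m-1}$ (so that the term $u_r s_1\cdots s_{m-1}$ sits uniformly with the $D(\ldots,r-j,\ldots)$ terms for $j<r$), multiplying the recursive bound by $r$ yields
\[
r\cdot D(i_1,\ldots,i_m,r,s_1,\ldots,s_m) \leq \sum_{j=0}^{r} u_j\bigl(C+j\,s_1\cdots s_{m-1}\bigr) \leq s_m\,C + i_m\,s_1\cdots s_{m-1},
\]
where the final inequality uses $\sum_{j=0}^r u_j = s_m$ and $\sum_{j=1}^r j u_j \leq i_m$ from the definition of $A(i_m,r,s_m)$. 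Dividing by $r$ recovers the bound of Corollary~\ref{cor-sz-gen}. Using instead the cruder estimate $D(\ldots,r-j,\ldots)\leq s_1\cdots s_{m-1}$ on every term in the recursion gives $D(i_1,\ldots,i_m,r,s_1,\ldots,s_m)\leq s_m\cdot s_1\cdots s_{m-1}=s_1\cdots s_m$ immediately.

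The main obstacle is the book-keeping in the inductive step: one must interpret the term $u_r s_1\cdots s_{m-1}$ in Definition~\ref{defD} (corresponding to those $a_m$ for which $r(a_m)$ already meets or exceeds the target multiplicity, so no constraint remains on $(a_1,\ldots,a_{m-1})$) on the same footing as the $D(\ldots,r-j,\ldots)$ terms for $j<r$. Once the $j=r$ and $j=0$ endpoints are absorbed into one uniform sum via the conventions above, the decomposition $rD=(r-j)D+jD$ lets the two inductive bounds cooperate to deliver both comparison inequalities from a single short computation, avoiding two separate case analyses.
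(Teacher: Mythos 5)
Your proof of the first claim is the paper's own proof: induction on $m$, partitioning $S_m$ by the value of $r(a_m)$ into counts $u_1,\ldots,u_r$ (with $u_r$ collecting all $a_m$ having $r(a_m)\ge r$), using (\ref{eqnoget}) and (\ref{eqnaeste}) to pass to the $(m-1)$-variable polynomial $F^{(0,\ldots,0,r(a_m))}(X_1,\ldots,X_{m-1},a_m)$ with leading monomial $X_1^{i_1}\cdots X_{m-1}^{i_{m-1}}$ and target multiplicity $r-r(a_m)$, and verifying $(u_1,\ldots,u_r)\in A(i_m,r,s_m)$ via (\ref{eqfoerste}). Where you genuinely diverge is on the two comparison claims. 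The paper dispatches the comparison with Corollary~\ref{cor-sz-gen} in a single sentence (``as both \ldots\ rely on (\ref{eqnoget})''), leaving the reader to trace how the maximization over $A(i_m,r,s_m)$ is dominated by the cruder summation behind Theorem~\ref{prop-sz-gen}; you instead prove $r\,D(i_1,\ldots,i_m,r,s_1,\ldots,s_m)\le i_1s_2\cdots s_m+\cdots+s_1\cdots s_{m-1}i_m$ by a self-contained induction on the recursion for $D$ itself, splitting $r\,D(\ldots,r-j,\ldots)=(r-j)D+jD$ so that the two inductive hypotheses $D(\ldots,l,\ldots)\le C/l$ and $D(\ldots,l,\ldots)\le s_1\cdots s_{m-1}$ combine with $\sum_{j=0}^{r}u_j=s_m$ and $\sum_{j=1}^{r}ju_j\le i_m$ to yield both comparison bounds in one computation. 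This checks out (in particular $s_mC+i_ms_1\cdots s_{m-1}$ does equal $i_1s_2\cdots s_m+\cdots+s_1\cdots s_{m-1}i_m$), and it has the virtue of being a statement about the function $D$ alone, independent of any polynomial interpretation; the cost is a few lines of bookkeeping where the paper spends a sentence. Both arguments are valid.
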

\begin{proof}
The proof of the first part of the proposition is an induction proof. The result clearly holds for
$m=1$. Given $m>1$ assume it holds for $m-1$. 
For $d=1, \ldots ,
r-1$ let $u_d$ be the number of $a_m$'s with $r(a_m)=d$ and let $u_r$
be the number of $a_m$'s with $r(a_m) \geq r$. The number of $a_m$'s
with $r(a_m)=0$ is $s_m-u_1-\cdots -u_r$. The boundary conditions that
$u_1+\cdots +u_r \leq s_m$ and $u_1+2u_2+\cdots +ru_r \leq i_m$ are
obvious. For every $a_m$ with $r(a_m)=d$, $d=0, \ldots , r-1$ for
$(a_1, \ldots , a_m)$ to be a zero of multiplicity at least $r$ the
last expression in~(\ref{eqnoget}) must be at least $r-d$. For
$a_m$ with $r(a_m)\geq r$ all choices of $a_1, \ldots , a_{m-1}$ are
legal. This proves the first part of the proposition. As both
Corollary~\ref{cor-sz-gen} and the above proof rely on~(\ref{eqnoget}), Theorem~\ref{prorec} cannot produce a
number greater than what is found in Corollary~\ref{cor-sz-gen}. The
condition $u_1+\cdots +u_r \leq s_m$ and the definition of
$D(i_1,r,s_1)$ imply the last result.
\end{proof}

The next remark shows that we need only apply the algorithm to a
restricted set of exponents $(i_1,\ldots ,i_m)$.

\begin{Remark}\label{rembig}
Given $(i_1, \ldots , i_m, r, s_1, \ldots ,s_m)$ with $\lfloor
i_1/s_1\rfloor+\cdots +\lfloor i_m/s_m\rfloor \geq r$ then there exist
polynomials with the leading monomial being $X_1^{i_1} \cdots X_m^{i_m}$ such that all points in $S_1 \times \cdots \times S_m$ are
zeros of multiplicity at least $r$. 
Hence, we need only apply
the algorithm to cases that do not satisfy the above inequality. In Section~\ref{seclin}, Example~\ref{exbig}, we will
explain this fact in more detail. 
\end{Remark}
In a series of experiments we found that the above algorithm produces
numbers that are often much lower than the minimum of the corresponding result from
Corollary~\ref{cor-sz-gen} and $s_1\cdots s_m$. In the
webpage~\cite{hmpage} we list all results of our experiments. Here,
we only mention a
few.
\begin{Example}\label{exny1}
In this example we bound the number of zeros of multiplicity 
$3$ or more for polynomials in two
variables. Both $S_1$ and $S_2$ are assumed to be of size $5$. Table~\ref{tabny1} shows 
information obtained from our algorithm for the exponents $i_1,i_2$
not treated by Remark~\ref{rembig}. Table~\ref{tabny2} illustrates the
improvement on the  bound $\lfloor \min\{
(i_1+i_2)5/3,5^2\}\rfloor$. Here, the first expression comes from
Corollary~\ref{cor-sz-gen} and the last expression is the number of
points in $S_1 \times S_2$. Observe, that the tables are not symmetric
meaning that $D(i_1,i_2,3,5,5)$ does not always equal $D(i_2,i_1,3,5,5)$.
\begin{table}
\centering
\caption{$D(i_1,i_2,3,5,5)$}
\newcommand{\SP}{~~}
\begin{tabular}{@{}c@{}cc@{\SP}c@{\SP}c@{\SP}c@{\SP}c@{\SP}c@{\SP}c@{\SP}c@{\SP}c@{\SP}c@{\SP}c@{\SP}c@{\SP}c@{\SP}c@{\SP}c@{}}
    \toprule
    &&\multicolumn{15}{c}{$i_1$}\\
    &&0   &1   &2   &3   &4   &5   &6   &7   &8   &9   &10  &11  &12  &13  &14\\
    \addlinespace
    \multirow{15}{*}{$i_2$}
    &\multicolumn{1}{c}{0 }&0 &0 &0 &5 &5 &5 &10&10&10&15&15&15&20&20&20\\
    &\multicolumn{1}{c}{1 }&0 &0 &1 &5 &6 &6 &11&11&12&16&17&17&21&21&21\\
    &\multicolumn{1}{c}{2 }&0 &1 &2 &7 &8 &9 &13&13&14&17&19&19&22&22&22\\
    &\multicolumn{1}{c}{3 }&5 &5 &5 &9 &9 &10&14&14&16&18&21&21&23&23&23\\
    &\multicolumn{1}{c}{4 }&5 &5 &6 &9 &11&13&16&16&18&19&23&23&24&24&24\\
    &\multicolumn{1}{c}{5 }&5 &6 &7 &11&12&14&17&17&20&20\\
    &\multicolumn{1}{c}{6 }&10&10&10&13&14&17&19&19&21&21\\
    &\multicolumn{1}{c}{7 }&10&10&11&13&15&18&20&20&22&22\\
    &\multicolumn{1}{c}{8 }&10&11&12&15&17&21&22&22&23&23\\
    &\multicolumn{1}{c}{9 }&15&15&15&17&18&22&23&23&24&24\\
    &\multicolumn{1}{c}{10}&15&15&16&17&20\\
    &\multicolumn{1}{c}{11}&15&16&17&19&21\\
    &\multicolumn{1}{c}{12}&20&20&20&21&22\\
    &\multicolumn{1}{c}{13}&20&20&21&21&23\\
    &\multicolumn{1}{c}{14}&20&21&22&23&24\\
    \bottomrule
\end{tabular}
\label{tabny1}
\end{table}

\begin{table}
\centering
\caption{Improvements found in Example~\ref{exny1}}
\newcommand{\SP}{~\hspace*{0.851em}}
\begin{tabular}{@{}c@{}cc@{\SP}c@{\SP}c@{\SP}c@{\SP}c@{\SP}c@{\SP}c@{\SP}c@{\SP}c@{\SP}c@{~~~}c@{~~}c@{~~}c@{~~}c@{~~}c@{}}
    \toprule
    &&\multicolumn{15}{c}{$i_1$}\\
    &&0&1&2&3&4&5&6&7&8&9&10&11&12&13&14\\
    \addlinespace
    \multirow{15}{*}{$i_2$}
    &\multicolumn{1}{c}{0}&0&1&3&0&1&3&0&1&3&0&1&3&0&1&3\\
    &\multicolumn{1}{c}{1}&1&3&4&1&2&4&0&2&3&0&1&3&0&2&4\\
    &\multicolumn{1}{c}{2}&3&4&4&1&2&2&0&2&2&1&1&2&1&3&3\\
    &\multicolumn{1}{c}{3}&0&1&3&1&2&3&1&2&2&2&0&2&2&2&2\\
    &\multicolumn{1}{c}{4}&1&3&4&2&2&2&0&2&2&2&0&2&1&1&1\\
    &\multicolumn{1}{c}{5}&3&4&4&2&3&2&1&3&1&3\\
    &\multicolumn{1}{c}{6}&0&1&3&2&2&1&1&2&2&4\\
    &\multicolumn{1}{c}{7}&1&3&4&3&3&2&1&3&3&3\\
    &\multicolumn{1}{c}{8}&3&4&4&3&3&0&1&3&2&2\\
    &\multicolumn{1}{c}{9}&0&1&3&3&3&1&2&2&1&1\\
    &\multicolumn{1}{c}{10}&1&3&4&4&3\\
    &\multicolumn{1}{c}{11}&3&4&4&4&4\\
    &\multicolumn{1}{c}{12}&0&1&3&4&3\\                                       
    &\multicolumn{1}{c}{13}&1&3&4&4&2\\
    &\multicolumn{1}{c}{14}&3&4&3&2&1\\
    \bottomrule
\end{tabular}
\label{tabny2}
\end{table}
\end{Example}
\newpage
\begin{Example}\label{exny2}

In this example we bound the number of zeros of multiplicity 
$3$ or more for polynomials in four
variables. The sets $S_1$, $S_2$, $S_3$ and $S_4$ are all assumed to
be of size $6$. Table~\ref{tabny3} shows 
information obtained from our algorithm for a small sample of values
$(i_1,i_2,i_3=3,i_4=5)$. Table~\ref{tabny4} illustrates the
improvement on the  bound $\min\{
(i_1+i_2+i_3+i_4)6^3/3,6^4\}$. Here, the first expression comes from
Corollary~\ref{cor-sz-gen} and the last expression is the number of
points in $S_1 \times S_2\times S_3\times S_4$.
\begin{table}
\centering
\caption{$D(i_1,i_2,i_3=3,i_4=5,3,6,6,6,6)$}

\newcommand{\SP}{~\hspace*{0.851em}}
\begin{tabular}{@{}c@{~~}c@{\SP}c@{\SP}c@{\SP}c@{\SP}c@{\SP}c@{\SP}c@{\SP}c@{\SP}c@{}}
    \toprule
    & &\multicolumn{8}{c}{$i_1$} \\
    & &   0   &   1   &   2   &    3   &    4   &    5   &    6   &    7   \\
    \addlinespace
    \multirow{8}{*}{$i_2$}
    &0&468&486&504& 642& 666& 720& 912& 912\\
    &1&486&501&536& 651& 705& 764& 964& 964\\
    &2&504&536&574& 700& 759& 840&1024&1024\\
    &3&642&651&666& 771& 816& 908&1077&1077\\
    &4&666&684&732& 807& 880& 984&1140&1140\\
    &5&720&750&816& 876& 952&1056&1197&1197\\
    &6&912&912&960& 976&1024&1134&1260&1260\\
    &7&912&928&980&1008&1060&1155&1263&1263\\
    \bottomrule
\end{tabular}
\label{tabny3}
\end{table}

\begin{table}
\centering
\caption{Improvements found in Example~\ref{exny2}}
\newcommand{\SP}{~\hspace*{0.851em}}
\begin{tabular}{@{}c@{~~}c@{\SP}c@{\SP}c@{\SP}c@{\SP}c@{\SP}c@{\SP}c@{\SP}c@{\SP}c@{}}
    \toprule
    & &\multicolumn{8}{c}{$i_1$} \\
    & &   0   &   1   &   2   &    3   &    4   &    5   &    6   &    7   \\
    \addlinespace
    \multirow{8}{*}{$i_2$}
    &0&108&162&216&150&198&216&96 &168\\
    &1&162&219&256&213&231&244&116&188\\
    &2&216&256&290&236&249&240&128&200\\
    &3&150&213&270&237&264&244&147&219\\
    &4&198&252&276&273&272&240&156&156\\
    &5&216&258&264&276&272&240&99 &99 \\
    &6&96 &168&192&248&272&162&36 &36 \\
    &7&168&224&244&288&236&141&33 &33 \\
    \bottomrule
\end{tabular}
\label{tabny4}
\end{table}
\end{Example}
\newpage
\begin{Example}\label{exangle}
Let $s_1=\cdots =s_m=q$. Our experiments listed in~\cite{hmpage} show that the value $D(i_1, \ldots ,
i_m,r,q,\ldots ,q)$
often improves dramatically on the previous known bounds. We here list the
maximal attained improvement for a selection of fixed values of $m, q,
r$. We do this relatively to the number of points in $S_1 \times
\cdots \times S_m$. In other words we list in Table~\ref{tabny5} the value
$$\bigg(\max_{i_1, \ldots , i_m} \{\min\{ (i_1+\cdots
i_m)q^{m-1}/r,q^m\}-D(i_1, \ldots , i_m,r,q,\ldots ,q)\}\bigg)/q^m$$
for various choices of $m, q, r$. 
\begin{table}
\centering
\caption{Maximum improvements relative to $q^m$; truncated}
\newcommand{\SP}{~~}
\begin{tabular}{@{}c@{}c@{~~~~}r@{.}l@{\SP}r@{.}l@{\SP}r@{.}l@{\SP}r@{.}l@{\SP}r@{.}l@{\SP}r@{.}l@{\SP}r@{.}l@{\SP}r@{.}l@{\SP}r@{.}l@{\SP}r@{.}l@{}}
\toprule
$m$&& \multicolumn{8}{c}{2} & \multicolumn{8}{c}{3} & \multicolumn{4}{c}{4} \\
\cmidrule(r){3-10} \cmidrule(r){11-18} \cmidrule{19-22}
$r$&&\multicolumn{2}{l}{2}&\multicolumn{2}{l}{3}&\multicolumn{2}{l}{4}&\multicolumn{2}{l}{5}&\multicolumn{2}{l}{2}&\multicolumn{2}{l}{3}&\multicolumn{2}{l}{4}&\multicolumn{2}{l}{5}&\multicolumn{2}{l}{2}&\multicolumn{2}{l}{3}\\
\addlinespace
\multirow{7}{*}{$q$}
&\multicolumn{1}{c}{2}&0&25 &0&25 &0&25 &0&25 &0&25 &0&375&0&375&0&375&0&312&0&375\\
&\multicolumn{1}{c}{3}&0&222&0&222&0&222&0&222&0&296&0&296&0&296&0&296&0&296&0&333\\
&\multicolumn{1}{c}{4}&0&187&0&187&0&187&0&187&0&281&0&25 &0&25 &0&265&0&316&0&289\\
&\multicolumn{1}{c}{5}&0&24 &0&16 &0&16 &0&2  &0&256&0&256&0&232&0&24 &0&307&0&288\\
&\multicolumn{1}{c}{6}&0&222&0&194&0&166&0&166&0&277&0&25 &0&231&0&212&0&293&0&287\\
&\multicolumn{1}{c}{7}&0&204&0&204&0&163&0&142&0&279&0&244&0&227&0&209&0&299&0&276\\
&\multicolumn{1}{c}{8}&0&234&0&203&0&171&0&140&0&275&0&25 &0&214&\multicolumn{2}{l}{?}&0&299&0&275\\  
\bottomrule
\end{tabular}
\label{tabny5}
\end{table}
The experiments also show a distinct average improvement. This is
illustrated in Table~\ref{tabendnuenny1} where for fixed $q, r, m$ we list
the mean value of
\begin{equation}
\frac{ \min \{(i_1+\cdots + i_m)q^{m-1},q^m\}-D(i_1, \ldots ,
i_m,r,q, \ldots ,q)}{\min \{(i_1+\cdots +
i_m)q^{m-1},q^m\}}.\label{eqangle}
\end{equation}
The average is taken over the set of exponents $(i_1, \ldots ,
i_m)\neq \vec{0}$ where 
$\lfloor i_1/q\rfloor+\cdots +\lfloor i_m/q\rfloor
< r$  
holds.
\begin{table}[!h]
\centering
\caption{The mean value of (\ref{eqangle}); truncated}
\newcommand{\SP}{~~}
\begin{tabular}{@{}c@{}c@{~~~~}r@{.}l@{\SP}r@{.}l@{\SP}r@{.}l@{\SP}r@{.}l@{\SP}r@{.}l@{\SP}r@{.}l@{\SP}r@{.}l@{\SP}r@{.}l@{\SP}r@{.}l@{\SP}r@{.}l@{}}
\toprule
$m$&& \multicolumn{8}{c}{2} & \multicolumn{8}{c}{3} & \multicolumn{4}{c}{4} \\
\cmidrule(r){3-10} \cmidrule(r){11-18} \cmidrule{19-22}
$r$&&\multicolumn{2}{l}{2}&\multicolumn{2}{l}{3}&\multicolumn{2}{l}{4}&\multicolumn{2}{l}{5}&\multicolumn{2}{l}{2}&\multicolumn{2}{l}{3}&\multicolumn{2}{l}{4}&\multicolumn{2}{l}{5}&\multicolumn{2}{l}{2}&\multicolumn{2}{l}{3}\\
\addlinespace
\multirow{7}{*}{$q$}
&\multicolumn{1}{c}{2}&0&363&0&273&0&337&0&291&0&301&0&300&0&342&0&307&0&248&0&260\\
&\multicolumn{1}{c}{3}&0&217&0&286&0&228&0&236&0&194&0&224&0&213&0&214&0&158&0&177\\
&\multicolumn{1}{c}{4}&0&191&0&197&0&232&0&195&0&158&0&169&0&180&0&172&0&125&0&135\\
&\multicolumn{1}{c}{5}&0&155&0&167&0&174&0&197&0&139&0&145&0&148&0&153&0&110&0&116\\
&\multicolumn{1}{c}{6}&0&148&0&160&0&156&0&154&0&128&0&132&0&132&0&131&0&100&0&105\\
&\multicolumn{1}{c}{7}&0&128&0&137&0&138&0&138&0&119&0&122&0&121&0&119&0&093&0&098\\
&\multicolumn{1}{c}{8}&0&126&0&127&0&134&0&126&0&114&0&115&0&113&\multicolumn{2}{l}{?}&0&089&0&093\\
\bottomrule
\end{tabular}
\label{tabendnuenny1}
\end{table}
\end{Example}

\begin{Example}
In Example~\ref{exny1} for any total degree $d$ there exists a choice of
$i_1,\ldots , i_m$ with $i_1+\cdots +i_m=d$ such that 
$$D(i_1,\cdots , i_m,r,q,\ldots , q)=\min\{ dq^{m-1}/r,q^m\}.$$
However, there are cases where such a result does not hold. Going through
all possible choices of $i_1,i_2,i_3$ with $i_1+i_2+i_3=12$ we see
that the largest obtained value of $D(i_1,i_2,i_3,3,8,8,8)$ equals
$224$ whereas $\min\{12\cdot 8^2/3,8^3\}=256$.
\end{Example}
The next two examples are of a theoretical nature.
\begin{Example}\label{ex1}
Given an arbitrary monomial ordering let ${\mbox{lm}}(F)=X_1^{i_1} \cdots X_m^{i_m}$ with $i_1 \leq s_1, \ldots , i_m
\leq s_m$. Using results from Gr\"{o}bner
basis theory we can deduce that $F$ can have no more than
\begin{equation}
s_1\cdots s_m-(s_1-i_1)\cdots (s_m-i_m) \label{eqrlig1}
\end{equation}
zeros (of multiplicity $1$ or more) over $S_1 \times \cdots \times S_m$. (See~\cite{hyperbolic} and
\cite{secondweight} for the case of $S_1=\cdots=S_m={\mathbf{F}}_q$.)
This result is known to be sharp meaning that polynomials exist with
this many zeros.
It is interesting to observe that~(\ref{eqrlig1}) follows as
an immediate corollary to Theorem~\ref{prorec} in the case where 
the monomial ordering $\prec$ is the pure lexicographic ordering with
$X_m\prec \cdots \prec X_1$. In contrast~(\ref{eqrlig1})
only equals the result in Corollary~\ref{cor-sz-gen} when
${\mbox{lm}}(F)$ is univariate; in general the
two bounds can differ very much. In
Section~\ref{secismall} we will see that for the case of the monomial
ordering being $\prec$ 
(\ref{eqrlig1}) can be viewed as a special case
of a more general result.
\end{Example}
\begin{Example}
Consider that the leading monomial is
univariate, i.e.\ ${\mbox{lm}}(F)=X_t^{i_t}$ for some $t \in \{1, \ldots
, m\}$. Theorem~\ref{prorec} tells us that $F$ can have at most 
$$s_1 \cdots s_{t-1} \lfloor \frac{i_t}{r}\rfloor   s_{t+1} \cdots s_m$$
zeros of multiplicity $r$ or more. In contrast Corollary~\ref{cor-sz-gen} only
gives us the bound 
$$\lfloor s_1\cdots s_{t-1} \frac{i_t}{r}s_{t+1}\cdots s_m \rfloor.$$
For $m >1$ the bounds are the same only when $r$ divides $i_t$. Assume in
larger generality that $i_{t_1}, \ldots ,i_{t_v}$, $t_u < t_w$ for $u
< w$ are the non-zero elements in $\{i_1, \ldots , i_m\}$. Then
$$D(i_1, \ldots , i_m,r,s_1, \ldots ,s_m)=
\bigg( \prod_{i_d=0}s_d \bigg) D(i_{t_1},\ldots,i_{t_v},r,s_{t_1}, \ldots , s_{t_v}).$$ 
\end{Example}

{\section{The case of two variables}\label{sectwovar}}
\noindent In this section we derive closed formulas for the case of two variables and
the multiplicity being arbitrary.  By
Remark~\ref{rembig} the following Proposition covers all non-trivial cases.\\
\begin{Proposition}\label{protwovar}
For $k=1, \ldots , r-1$,  $D(i_1,i_2,r,s_1,s_2)$ is upper bounded by\\
$\begin{array}{cl}
{\mbox{(C.1)}}&  {\displaystyle{s_2\frac{i_1}{r}+\frac{i_2}{r}\frac{i_1}{r-k}}}\\
&{\mbox{if \  }}(r-k)\frac{r}{r+1}s_1 \leq i_1 < (r-k)s_1
{\mbox{ \ and \ }} 0\leq i_2 <ks_2\\
{\mbox{(C.2)}}&
  {\displaystyle{s_2\frac{i_1}{r}+((k+1)s_2-i_2)(\frac{i_1}{r-k}-\frac{i_1}{r})+(i_2-ks_2)(s_1-\frac{i_1}{r})}}\\
& {\mbox{if \ }}(r-k)\frac{r}{r+1}s_1 \leq i_1 < (r-k)s_1 {\mbox{ \
    and \ }} ks_2\leq i_2 <(k+1)s_2\\
{\mbox{(C.3)}}&
{\displaystyle{s_2\frac{i_1}{r}+\frac{i_2}{k+1}(s_1-\frac{i_1}{r})}}\\
&{\mbox{if \ }} (r-k-1)s_1 \leq i_1 < (r-k)\frac{r}{r+1}s_1 {\mbox{ \
    and \ }} 0 \leq i_2 < (k+1)s_2.
\end{array}
$\\
Finally,\\
$\begin{array}{cl}
{\mbox{(C.4)}}& {\displaystyle{D(i_1,i_2,r,s_1,s_2)=s_2\lfloor \frac{i_1}{r} \rfloor
  +i_2(s_1-\lfloor \frac{i_1}{r} \rfloor )}}\\
& {\mbox{if \ }} s_1(r-1) \leq i_1 < s_1r {\mbox{ \ and \ }} 0 \leq i_2 < s_2.
\end{array}
$\\
The above numbers are at most equal to $\min\{(i_1s_2+s_1i_2)/r, s_1s_2 \}$.
\end{Proposition}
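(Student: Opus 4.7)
The plan is to interpret $D(i_1,i_2,r,s_1,s_2)$ as the optimum of a linear program over the polytope $A(i_2,r,s_2)$ and to upper bound it by passing to the continuous relaxation (allowing $u_j\in\mathbb{R}_{\ge 0}$) and by replacing each $\lfloor i_1/j\rfloor$ with $i_1/j$. Writing the Definition~\ref{defD} objective as
\[
s_2\delta+\sum_{j=1}^{r-1}u_j c_j+u_r c_r,\qquad \delta:=\lfloor i_1/r\rfloor,\ \ c_j:=D(i_1,r-j,s_1)-\delta,\ \ c_r:=s_1-\delta,
\]
the task becomes a two-constraint LP: maximize subject to $\sum u_j\le s_2$ and $\sum j u_j\le i_2$ together with nonnegativity. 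An optimal vertex therefore has at most two nonzero coordinates.

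The first step is to read off the $c_j$ using $D(i_1,j,s_1)=\min\{\lfloor i_1/j\rfloor,s_1\}$. In the $i_1$-range of (C.1)--(C.3) one finds $c_j=\lfloor i_1/(r-j)\rfloor-\delta$ for $1\le j\le k$ and $c_j=s_1-\delta$ for $k+1\le j\le r$, while in (C.4) one has $c_j=s_1-\delta$ for every $j$. After the linearization, the efficiency ratios satisfy $c_j/j\le i_1/(r(r-j))$ and are increasing in $j$ for $j\le k$, while $c_j/j=(s_1-i_1/r)/j$ decreases for $j\ge k+1$. A direct calculation shows $c_k/k>c_{k+1}/(k+1)$ if and only if $i_1>(r-k)\tfrac{r}{r+1}s_1$, which is precisely the split between the (C.1)/(C.2) regime and the (C.3) regime.

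Next I would solve the LP in each regime. In (C.1) the inequality $i_2<ks_2$ leaves $\sum u_j\le s_2$ slack, so concentrating all weight on the best-rate index, $u_k=i_2/k$, is optimal and yields the stated bound. In (C.2) the sum-constraint becomes active; starting from $u_k=s_2$ and shifting mass to some $u_{k'}$ with $k'>k$ gives a per-unit gain $(s_1-\lfloor i_1/(r-k)\rfloor)/(k'-k)$ in objective (positive since $i_1<(r-k)s_1$), which is largest at $k'=k+1$. The shift exhausts the remaining $i_2$-budget and produces $u_k=(k+1)s_2-i_2,\ u_{k+1}=i_2-ks_2$, giving (C.2). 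In (C.3) the best rate has moved to $j=k+1$ and the sum-constraint is slack again since $i_2<(k+1)s_2$, so $u_{k+1}=i_2/(k+1)$ gives (C.3). In (C.4) every $c_j$ equals $s_1-\delta$, so the objective depends only on $\sum u_j\le\min(s_2,i_2)=i_2$; setting $u_1=i_2$ is integer-feasible and optimal, giving the equality in (C.4) with no relaxation used.

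The main obstacle is not the LP itself but keeping the linearization and boundary bookkeeping honest and verifying the final comparison $D\le\min\{(i_1s_2+s_1i_2)/r,\,s_1s_2\}$. The bound $s_1s_2$ is trivial because each $c_j\le s_1-\delta$ and $\sum u_j\le s_2$. For the bound $(i_1s_2+s_1i_2)/r$ a case-by-case check suffices: in (C.1) it reduces to $i_1/(r-k)\le s_1$, the range hypothesis; in (C.3) to $i_1\ge(r-k-1)s_1$, again the range hypothesis; in (C.2), being linear in $i_2$, it reduces to verifying the inequality at the endpoints $i_2=ks_2$ and $i_2=(k+1)s_2$, both of which boil down to the same range hypotheses; and (C.4) is a direct computation using $\delta\ge\lfloor(r-1)s_1/r\rfloor$. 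Care is also needed at the boundary $i_1=(r-k)\tfrac{r}{r+1}s_1$, where (C.1)/(C.2) and (C.3) must be shown to agree, which follows by substituting this value of $i_1$ into both formulas.
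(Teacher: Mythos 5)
Your proposal is correct and follows essentially the same route as the paper's proof: relax the integer program defining $D(i_1,i_2,r,s_1,s_2)$ to a rational linear program over the two-constraint polytope, replace the floors $\lfloor i_1/j\rfloor$ by $i_1/j$, compare the per-unit rates of the coefficients (increasing up to index $k$, decreasing from $k+1$ on, with the crossover at $i_1=(r-k)\frac{r}{r+1}s_1$), and read off the optimal vertex in each of the four regimes. The only difference is that you spell out the final comparison with $\min\{(i_1s_2+s_1i_2)/r,\,s_1s_2\}$ case by case, where the paper simply refers back to the argument in Theorem~\ref{prorec}; both are valid.
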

\begin{proof}First we consider the values of $i_1, i_2,
r,s_1,s_2$ corresponding to one of the cases (C.1), (C.2), (C.3). Let
$k$ be the largest number (as in Proposition~\ref{protwovar})
  such that $i_1 < (r-k)s_1$. Indeed $k \in \{1, \ldots ,
r-1\}$. We have
\begin{multline}
\label{eqsnabelen}
D(i_1,i_2,r,s_1,s_2) \leq\\
\max_{(u_1, \ldots  ,u_r)\in B(i_2,r,s_2)} \bigg\{
s_2\frac{i_1}{r}+u_1(\frac{i_1}{r-1}-\frac{i_1}{r})+\cdots
+u_k(\frac{i_1}{r-k}-\frac{i_1}{r})\\
+u_{k+1}(s_1-\frac{i_1}{r})+ \cdots +u_r(s_1-\frac{i_1}{r})\bigg\}
\end{multline}
where 
\begin{multline*}
B(i_2,r,s_2)=\{(u_1, \ldots , u_r) \in {\mathbf{Q}}^r \mid 0 \leq u_1,
\ldots ,u_r,\\
u_1+\cdots +u_r \leq s_2, u_1+2u_2+
\cdots +ru_r \leq i_2\}.
\end{multline*}
We observe, that 
$$k(\frac{i_1}{r-l}-\frac{i_1}{r})\leq l
(\frac{i_1}{r-k}-\frac{i_1}{r})$$ 
holds for $l \leq k$. Furthermore, we have the biimplication
\begin{eqnarray}
(r-k)\frac{r}{r+1} s_1 
\leq i_1
\Leftrightarrow (k+1)(\frac{i_1}{r-k}-\frac{i_1}{r}) \geq
k(s_1-\frac{i_1}{r}). \nonumber
\end{eqnarray}
Therefore, if the conditions in (C.1) are satisfied then~(\ref{eqsnabelen}) takes on its maximum when
$u_k=\frac{i_2}{k}$ and the remaining $u_i$'s equal $0$. If the
conditions in (C.2) are satisfied then (\ref{eqsnabelen}) takes on its
maximum at $u_k=(k+1)s_2-i_2$, $u_{k+1}=(i_2-ks_2)$ and the remaining
$u_i$'s equal $0$. If the conditions in (C.3) are satisfied
then~(\ref{eqsnabelen}) takes on its maximal value at
$u_{k+1}=\frac{i_2}{k+1}$ and the remaining $u_i$'s equal $0$.\\
Finally, if $s_1(r-1) \leq i_1 < s_1r$ and $0 \leq i_2 \leq s_2$ then
$D(i_1,i_2,r,s_1,s_2)$ is the maximal value of
$$s_2\lfloor \frac{i_1}{r} \rfloor +u_1(s_1-\lfloor \frac{i_1}{r}
\rfloor)+ \cdots +u_r(s_1-\lfloor \frac{i_1}{r}
\rfloor)$$
over $B(i_2,r,s_2)$. The maximum is attained for $u_1=i_2$ and all other
$u_i$'s equal $0$. The proof of the last result follows the proof of
the last part of Theorem~\ref{prorec}.
\end{proof}

\begin{Remark}
Experiments show (see~\cite{hmpage}) that 
 the numbers produced by Proposition~\ref{protwovar} are often much
smaller than $\min \{ (i_1s_2 +s_1i_2)/r,s_1s_2\}$. However, there are
cases where they are the identical. This happens for example when
$i_1=s_1(r-1)$ and $r$ divides $s_1$ and $s_2$. In the
proof of (C.1), (C.2), (C.3) we allowed $u_1, \ldots , u_r$ to be
rational numbers rather than integers. Therefore we cannot expect the upper bounds in
Proposition~\ref{protwovar} to equal the true value of
$D(i_1,i_2,r,s_1,s_2)$ in general. Our experiments show that the
bounds in (C.1), (C.2), (C.3) are sometimes close to
$D(i_1,i_2,r,s_1,s_2)$ but not always. Hence the best information is
found by actually applying the algorithm from the previous section.
\end{Remark}
{\section{When $i_1, \ldots ,i_m$ are small}\label{secismall}}
\noindent
Having already four different cases when $m=2$ the situation gets
rather complicated when we have more variables. Assuming however that
all exponents $i_1, \ldots , i_m$ in the leading monomial are small we
can give a very simple formula. Whereas the formula is simple we must 
admit that the precise definition of $i_1, \ldots , i_m$ being small
is a little involved. It goes as follows.
\begin{Definition}\label{defconda}
Let $m \geq 2$. We say that $(i_1, \ldots , i_m,r,s_1, \ldots , s_m)$ satisfies Condition A if the following hold
$$
\begin{array}{rl}
(A.1)& i_1, \ldots ,i_m \leq s_m\\
(A.2)&s(s_1-\frac{i_1}{l}) \cdots (s_{m-2}-\frac{i_{m-2}}{l}) \leq l
  (s_1-\frac{i_1}{s})\cdots (s_{m-2}-\frac{i_{m-2}}{s})\\
& {\mbox{ \ for all \ }} 
  l=2, \ldots , r, s=1, \ldots l-1.\\
(A.3)&s(s_1-\frac{i_1}{r}) \cdots (s_{m-1}-\frac{i_{m-1}}{r}) \leq r
  (s_1-\frac{i_1}{s})\cdots (s_{m-1}-\frac{i_{m-1}}{s})\\
&{\mbox{ \  for all \ }} s=1, \ldots
  , r-1.
\end{array}
$$
\end{Definition} 
\begin{Example}\label{exny}
If $r=1$ then (A.2) and (A.3) do not apply and with a reference to
Remark~\ref{rembig} (A.1) is a natural requirement.
\end{Example}
\begin{Example}\label{exsmart}
For $m=2$ and $r$ arbitrary  condition (A.2) does not apply and
condition (A.3)
simplifies to
$$i_1 \leq \frac{r^2s-rs^2}{r^2-s^2}s_1$$
for all $s$ with $1 \leq s <  r$. The minimal upper bound on
$i_1$ is attained for $s=1$. Hence, in case of two variables Condition A reads $i_1
\leq \frac{r}{r+1}s_1$, $i_2 \leq s_2$. 
\end{Example}
\begin{Example}\label{exsmalll}
For $r=2$ conditions (A.2), (A.3) simplifies all together to  
$$\big(s_1-\frac{i_1}{2}\big) \cdots\big(s_{m-1}-\frac{i_{m-1}}{2}\big) \leq
2\big(s_1-i_1\big) \cdots\big(s_{m-1}-i_{m-1}\big).$$
For $r=2$, $m=3$ and $s_1=s_2=s_3=q$ Condition A therefore reads
$$\frac{3}{2}(I_1+I_2)-\frac{7}{4}I_1I_2 \leq 1, {\mbox{ \ \ }} I_3
\leq 1$$
where $I_1=i_1/q$, $I_2=i_2/q$ and $I_3=i_3/q$. For $r=2$, $m=4$ and $s_1=s_2=s_3=s_4=q$ Condition A reads
  $$\frac{3}{2}(I_1+I_2+I_3)-\frac{7}{4}(I_1I_2+I_1I_3+I_2I_3)+\frac{15}{8}I_1I_2I_3 \leq 1, {\mbox{ \ \ }} I_4 \leq 1$$
where $I_4=i_4/q$. This is illustrated in Figure~\ref{figimp}. 
\begin{figure}
\begin{center}
\input{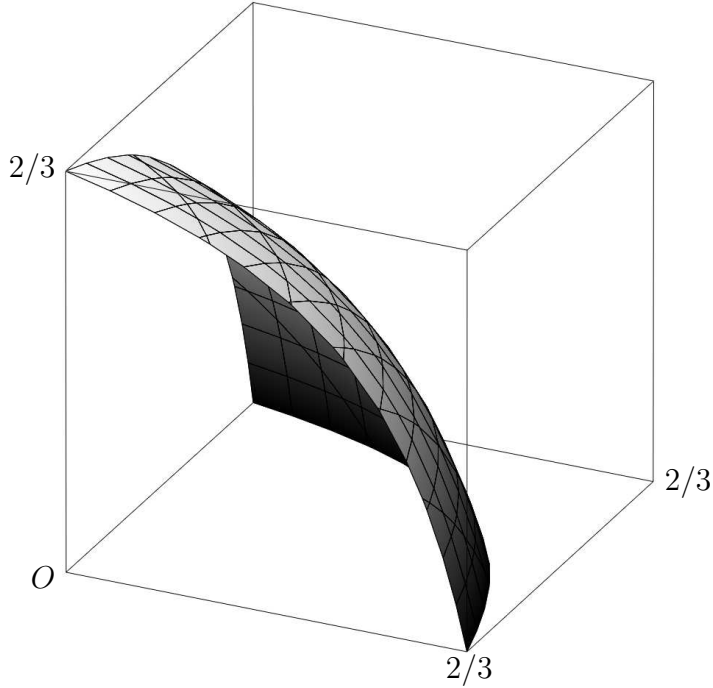}
\end{center}
\caption{The surface $\frac{3}{2}(I_1+I_2+I_3)-\frac{7}{4}(I_1I_2+I_1I_3+I_2I_3)+\frac{15}{8}I_1I_2I_3 = 1$}
\label{figimp}
\end{figure}
\end{Example}
In Example~\ref{ex1} we discussed a well known bound on the number of
zeros of 
multiplicity at least $r=1$. With Example~\ref{exny} in mind the last part of the following Proposition can be viewed as
a generalization of this bound. 
\begin{Proposition}\label{prosmall}
Assume that $(i_1, \ldots , i_m,r,s_1, \ldots , s_m)$, $m\geq 2$ satisfies Condition
A. If $r \geq 2$ then 
\begin{equation}
i_1 \leq \frac{r}{r+1}s_1, \ldots , i_{m-1} \leq \frac{r}{r+1}s_{m-1}.\label{eqeqeq}\end{equation}
For general $r$ we have 
\begin{eqnarray}
D(i_1, \ldots ,i_m,r,s_1,\ldots , s_m) \leq s_1\cdots s_m-(s_1-\frac{i_1}{r})\cdots
(s_m-\frac{i_m}{r})\label{eqendnuenstjerne}
\end{eqnarray}
which is at most equal to $\min \{ (i_1s_2\cdots s_m+\cdots +s_1\cdots
s_{m-1}i_m)/r, s_1\cdots s_m\}$.
\end{Proposition}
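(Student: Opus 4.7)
My plan is to prove the three assertions in turn, with (\ref{eqendnuenstjerne}) established by induction on $m \geq 2$.

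For the first claim, I apply (A.3) with $s = 1$, which yields $\prod_{j=1}^{m-1}(s_j - i_j/r) \leq r \prod_{j=1}^{m-1}(s_j - i_j)$. A short rearrangement shows $s_j - i_j/r \leq r(s_j - i_j)$ is equivalent to $i_j \leq \frac{r}{r+1}s_j$, so writing $R_j := (s_j - i_j/r)/(s_j - i_j)$ one has $R_j \geq 1$ in general and $R_j > r$ precisely when $i_j > \frac{r}{r+1}s_j$. If some $i_{j_0}$ with $j_0 \leq m-1$ exceeded $\frac{r}{r+1}s_{j_0}$, then $\prod_j R_j$ would strictly exceed $r$, contradicting the bound above. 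The boundary case $i_j \geq s_j$ is excluded similarly since the right-hand side of (A.3) would then vanish while the left-hand side is positive.

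For (\ref{eqendnuenstjerne}), the base case $m = 2$ is Proposition~\ref{protwovar}, case (C.3) with $k = r-1$: Condition~A at $m=2$ forces $0 \leq i_1 \leq \frac{r}{r+1}s_1$ and $0 \leq i_2 \leq s_2$, placing us in that case, and the (C.3) bound $s_2 \tfrac{i_1}{r} + \tfrac{i_2}{r}(s_1 - \tfrac{i_1}{r})$ expands algebraically to $s_1 s_2 - (s_1 - i_1/r)(s_2 - i_2/r)$. For the inductive step set $S := s_1 \cdots s_{m-1}$, $Q_l := \prod_{j=1}^{m-1}(s_j - i_j/l)$, and $D^{(l)} := D(i_1,\ldots,i_{m-1},l,s_1,\ldots,s_{m-1})$. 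The recursive definition of $D$ combined with the inductive hypothesis $D^{(l)} \leq S - Q_l$ gives
\begin{equation*}
D \leq \max_{(u_1,\ldots,u_r) \in A(i_m,r,s_m)}\Bigl\{(s_m - u_1 - \cdots - u_r)(S - Q_r) + \sum_{d=1}^{r-1} u_d(S - Q_{r-d}) + u_r S\Bigr\}.
\end{equation*}
After rearrangement and applying the constraint $u_1 + 2u_2 + \cdots + r u_r \leq i_m$, the target inequality $D \leq s_m S - (s_m - i_m/r)Q_r$ reduces to
\begin{equation*}
\sum_{d=1}^{r-1} u_d\Bigl[\tfrac{r-d}{r}Q_r - Q_{r-d}\Bigr] \leq 0,
\end{equation*}
which follows pointwise from $(r-d)Q_r \leq r Q_{r-d}$. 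Setting $s = r - d \in \{1,\ldots,r-1\}$, this is exactly (A.3); the $r = 1$ case is immediate since both sums are empty.

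The main technical obstacle will be propagating Condition~A along the induction, since invoking the hypothesis at $(m-1, l)$ for each $l \leq r$ requires Condition~A at that smaller instance. The $(m-1)$-level analogue of (A.3), involving $(m-2)$-fold products, is exactly (A.2) at level $l$; the $(m-3)$-fold, $(m-4)$-fold, \ldots\ versions needed for deeper recursion all follow from (A.2) by successively dividing both sides by the trailing factor $(s_k - i_k/s)$, the quotient $(s_k - i_k/l)/(s_k - i_k/s) \geq 1$ (valid for $l \geq s$) ensuring the inequality only tightens. Positivity of every factor is secured by (A.1).

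Finally, $\prod_j(s_j - i_j/r) \geq 0$ yields $s_1 \cdots s_m - \prod_j(s_j - i_j/r) \leq s_1 \cdots s_m$ immediately, while setting $x_j := i_j/(rs_j) \in [0,1]$ the classical Bernoulli-type inequality $\prod_j(1-x_j) \geq 1 - \sum_j x_j$ multiplied by $s_1 \cdots s_m$ gives $\prod_j(s_j - i_j/r) \geq s_1 \cdots s_m - \tfrac{1}{r}(i_1 s_2 \cdots s_m + \cdots + s_1 \cdots s_{m-1} i_m)$, equivalent to the remaining claim.
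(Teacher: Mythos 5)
Your proof is correct and follows essentially the same route as the paper's: both unwind the recursive definition of $D$, relax the $u_d$ to nonnegative rationals, and use (A.2)/(A.3) as exactly the inequalities that make the resulting linear optimization put all weight on $u_l=i_t/l$, with (\ref{eqeqeq}) obtained from the single-factor consequences of (A.3) by dividing out the remaining (positive) factors. The only organizational differences are that the paper inducts on the prefix length $t$ inside the fixed instance (base case $t=1$, so Proposition~\ref{protwovar} is never invoked and no propagation of Condition A to sub-instances is needed -- the required prefix inequalities are read off (A.2)/(A.3) directly), and that you make the final comparison explicit via $\prod_j(1-x_j)\geq 1-\sum_j x_j$ where the paper merely defers to the proof of the last part of Theorem~\ref{prorec}.
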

\begin{proof}
We start by noting that (A.2) implies
$$
(s_1-\frac{i_1}{l}) \cdots (s_{t-1}-\frac{i_{t-1}}{l}) \leq l
  (s_1-\frac{i_1}{s})\cdots (s_{t-1}-\frac{i_{t-1}}{s})
$$
for all 
  $t=2,\ldots , m-1$, $l=2, \ldots , r$, $s=1, \ldots l-1$. A similar
  thing holds regarding (A.3) and if we combine this fact with the
  result in Example~\ref{exsmart} we get 
(\ref{eqeqeq}) for $r \geq 2$.
Now let $(i_1, \ldots ,i_m,r,s_1, \ldots ,s_m)$ with $m>1$ be
such that Condition A holds. We give an induction proof
that 
\begin{equation}
\begin{array}{r}
D(i_1, \ldots , i_t,l,s_1,\ldots ,s_t)  \leq s_1\cdots s_t-(s_1-\frac{i_1}{l})
\cdots (s_t-\frac{i_{t}}{l})\\
 {\mbox{ \ for all \ }}1 \leq t < m , 1 \leq
  l\leq r
\end{array}
\label{eqminus1}
\end{equation} 
For $t=1$ the
result is clear. Let $1 < t<m$ and assume the result holds when $t$ is
substituted with $t-1$. According to Definition~\ref{defD} we have
\begin{multline*}
D(i_1, \ldots , i_t,l,s_1, \ldots ,s_t)=
\\
\begin{split}
\max_{(u_1, \ldots  ,u_l)\in A(i_t,l,s_t) }\bigg\{& (s_t-u_1-\cdots -u_l)D(i_1,\ldots ,i_{t-1},l,s_1,
\ldots ,s_{t-1})\\
&+u_1D(i_1, \ldots , i_{t-1},l-1,s_1, \ldots ,s_{t-1})+\cdots
\\
&+u_{l-1}D(i_1, \ldots ,i_{t-1},1,s_1, \ldots ,
s_{t-1})+u_ls_1\cdots s_{t-1} \bigg\}
\end{split}
\end{multline*}
where 
\begin{eqnarray}
A(i_t,l,s_t)&=&\{(u_1, \ldots ,u_l) \in {\mathbf{N}}_0^l \mid
u_1+2u_2+\cdots + l u_l \leq i_t\}\nonumber
\end{eqnarray}
follows from~(\ref{eqeqeq}). By the above assumptions this implies that
\begin{multline*}
D(i_1, \ldots ,i_t,l,s_1, \ldots ,s_t) \leq
\\
\shoveleft
\max_{(u_1, \ldots  ,u_l)\in B(i_t,l,s_t) } \bigg\{ s_t\big(
s_1 \cdots s_{t-1}-(s_1-\frac{i_1}{l})\cdots(s_{t-1}-\frac{i_{t-1}}{l})\big)\\
\begin{split}
&+u_1\big((s_1-\frac{i_1}{l})\cdots(s_{t-1}-\frac{i_{t-1}}{l})-(s_1-\frac{i_1}{l-1})\cdots(s_{t-1}-\frac{i_{t-1}}{l-1}) \big)\\
&+ \cdots
\\
&+u_{l-1}\big(
(s_1-\frac{i_1}{l})\cdots(s_{t-1}-\frac{i_{t-1}}{l})-(s_1-\frac{i_1}{1})\cdots(s_{t-1}-\frac{i_{t-1}}{1})
\big)\\
&+u_{l}\big((s_1-\frac{i_1}{l})\cdots(s_{t-1}-\frac{i_{t-1}}{l})\big) \bigg\}
\end{split}
\end{multline*}
where $$B(i_{t},l,s_t)=\{(u_1, \ldots , u_l) \in {\mathbf{Q}}^l \mid 0
\leq u_1, \ldots , u_l {\mbox{ \ and \ }} u_1+2u_2+\cdots +lu_l \leq i_t\}.$$
As $t<m $ condition  (A.2) applies and tells us that the maximal value
is attained for  $u_1=\cdots =u_{l-1}=0$ and
$u_l=\frac{i_t}{l}$. This concludes the induction proof of (\ref{eqminus1}).\\
To show (\ref{eqendnuenstjerne}) we apply similar arguments
to the case
$t=m$
but use condition (A.3) rather than condition (A.2). 
The proof of the last result in the proposition follows the proof of
the last part of Theorem~\ref{prorec}.
\end{proof}
\begin{Remark}
Experiments show (see~\cite{hmpage}) that the
bound in Theorem~\ref{prorec} is very often much better than  $\min \{
(i_1s_2\cdots s_m+\cdots + s_1 \cdots s_{m-1}i_m)/r,s_1\cdots s_m\}$,
however, they also reveal that in many cases one can get more information about the number of zeros by actually
applying the algorithm from Section~\ref{sec-cor-sz-gen-bet}.
\end{Remark}
\begin{Example}
This is a continuation of Example~\ref{exsmart} where we translated
Condition A into bounds on $i_1$ and $i_2$ in the case of
two variables. Applying in turn Proposition~\ref{prosmall} and
(C.3) in Proposition~\ref{protwovar} with $k=r-1$ we see that the two
bounds produce the very same values when $m=2$.
\end{Example}

{\section{Products of univariate linear terms}\label{seclin}}
\noindent
In this section we study the situation where $F(\vec{X})$ is
a product of univariate linear terms. First we note that
equivalently to Defintion~\ref{defmult} one can define the
multiplicity of a polynomial as
follows.
\begin{Definition}
Let $F(\vec{X}) \in {\mathbf{F}}[\vec{X}]\backslash \{0\}$ and
$\vec{a}=(a_1, \ldots , a_m) \in {\mathbf{F}}^m$. 
Consider the ideal 
\begin{eqnarray}
J_t=\langle (X_1-a_1)^{p_1}\cdots(X_m-a_m)^{p_m} \mid p_1+\cdots
+p_m=t \rangle \subseteq {\mathbf{F}}[X_1, \ldots , X_m]. \nonumber
\end{eqnarray}
We have
${\mbox{mult}}(F,\vec{a})=r$ if $F\in J_r \backslash
J_{r+1}$. If $F=0$ we have ${\mbox{mult}}(F,\vec{a})=\infty$.
\end{Definition}
The above definition makes it particularly simple to calculate the number of
zeros of multiplicity at least $r$ when $F$ is a product of univariate
linear terms. In the following write 
$$S_j=\{\alpha_1^{(j)}, \ldots , \alpha_{s_j}^{(j)} \}$$
for $j=1, \ldots , m$.
\begin{Proposition}\label{pronuogsaa}
Consider 
$$F(\vec{X})=\prod_{u=1}^{m} \prod_{v=1}^{s_u}(X_u-\alpha_v^{(u)})^{r_{v}^{(u)}}.$$
The multiplicity of $(\alpha_{j_1}^{(1)}, \ldots ,
\alpha_{j_m}^{(m)})$ in $F(\vec{X})$ equals
\begin{equation}
r_{j_1}^{(1)}+\cdots +r_{j_m}^{(m)}. \label{eqHjubi}
\end{equation}
\end{Proposition}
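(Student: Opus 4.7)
The plan is to use the alternative characterization of multiplicity via the ideals $J_t$ stated just above the proposition, combined with a linear change of coordinates placing $\vec{a}=(\alpha_{j_1}^{(1)}, \ldots, \alpha_{j_m}^{(m)})$ at the origin. Setting $R = r_{j_1}^{(1)}+\cdots +r_{j_m}^{(m)}$, the goal is to show that $F \in J_R \setminus J_{R+1}$.

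First I would introduce new variables $Y_u = X_u - \alpha_{j_u}^{(u)}$. In these variables the ideal $J_t$ becomes $\langle Y_1^{p_1}\cdots Y_m^{p_m} \mid p_1+\cdots +p_m = t \rangle$, which (directly from the definition of an ideal) equals the ${\mathbf{F}}$-span of all $Y$-monomials of total degree at least $t$. So a polynomial belongs to $J_t$ iff every monomial in its expansion in the $Y$-variables has total degree at least $t$, and consequently the multiplicity of $F$ at $\vec{a}$ equals the smallest total $Y$-degree occurring among the monomials of $F$ when written in these new coordinates.

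Next I would rewrite each factor in $Y$-coordinates. For the indices $u$ and $v=j_u$, the factor $(X_u-\alpha_{j_u}^{(u)})^{r_{j_u}^{(u)}}$ becomes $Y_u^{r_{j_u}^{(u)}}$. For $v\neq j_u$ the scalar $\beta_{u,v}:=\alpha_{j_u}^{(u)}-\alpha_v^{(u)}$ is nonzero, since the elements of $S_u$ are distinct, so $(X_u-\alpha_v^{(u)})^{r_v^{(u)}}=(Y_u+\beta_{u,v})^{r_v^{(u)}}$ is a univariate polynomial in $Y_u$ whose $Y_u$-free term is $\beta_{u,v}^{r_v^{(u)}}\neq 0$. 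Multiplying these together yields
$$F = \prod_{u=1}^{m} Y_u^{r_{j_u}^{(u)}}\, G_u(Y_u), \qquad G_u(0)=\prod_{v\neq j_u}\beta_{u,v}^{r_v^{(u)}}\neq 0.$$

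Finally, expanding this product I would identify the unique monomial of smallest total $Y$-degree as $\bigl(\prod_{u} G_u(0)\bigr)\cdot Y_1^{r_{j_1}^{(1)}}\cdots Y_m^{r_{j_m}^{(m)}}$, which has total degree exactly $R$ and a nonzero coefficient, while every other monomial in $F$ has strictly larger total $Y$-degree. By the reformulation of the first paragraph this shows $F\in J_R\setminus J_{R+1}$, giving ${\mbox{mult}}(F,\vec{a})=R$ as required. No step is really difficult; the only point requiring any care is the nonvanishing of the $\beta_{u,v}$, which follows at once from the distinctness of the elements listed in each $S_u$.
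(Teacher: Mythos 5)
Your proof is correct and follows the same overall route as the paper's: both pass to coordinates in which the point becomes the origin, use the ideal-theoretic characterization of multiplicity via the ideals $J_t$ stated just before the proposition, note that membership in $J_R$ is immediate, and conclude by exhibiting a monomial of total degree exactly $R$ with nonzero coefficient in the shifted polynomial. The genuine difference lies in how non-membership in $J_{R+1}$ is certified. The paper invokes Gr\"obner basis theory: by Buchberger's S-pair criterion the degree-$(R+1)$ monomials form a Gr\"obner basis of $J_{R+1}$, and since the shifted $F$ has a degree-$R$ monomial in its support, its remainder on division is nonzero. You instead observe directly that a monomial ideal generated by all monomials of total degree $t$ is precisely the $\mathbf{F}$-span of the monomials of degree at least $t$, so that the multiplicity is simply the minimal total degree occurring in the shifted expansion. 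Your route is more elementary and self-contained, and it also makes explicit --- via the factorization $\prod_u Y_u^{r_{j_u}^{(u)}}G_u(Y_u)$ with $G_u(0)=\prod_{v\neq j_u}\beta_{u,v}^{r_v^{(u)}}\neq 0$ --- the step the paper only asserts, namely that the support of the shifted $F$ contains a monomial of total degree exactly $R$ (and that it is the unique minimal-degree term, so no cancellation can occur). The Gr\"obner phrasing buys little here beyond brevity for readers fluent in that language; both arguments ultimately rest on the unproved equivalence of the $J_t$-definition with the Hasse-derivative definition of multiplicity, which the paper states without proof immediately before the proposition.
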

\begin{proof}
Without loss of generality assume
$j_1=\cdots =j_m=1$. Clearly, the multiplicity is greater than or equal to
$r=r_1^{(1)}+\cdots +r_1^{(m)}$. Using Gr\"{o}bner basis theory we now
show that it is not larger. We substitute
${\mathcal{X}}_i=X_i-\alpha_1^{(i)}$ for $i=1, \ldots ,m$ and observe
that by Buchberger's S-pair criteria 
$${\mathcal{B}}=\{{\mathcal{X}}_1^{r_1} \cdots {\mathcal{X}}_m^{r_m}
\mid r_1+\cdots +r_m=r+1\}$$
is a Gr\"{o}bner basis (with respect to any fixed monomial
ordering). 
The support of $F({\mathcal{X}}_1, \ldots , {\mathcal{X}}_m)$ contains
a monomial of the form ${\mathcal{X}}_1^{i_1} \cdots
{\mathcal{X}}_m^{i_m}$ with $i_1+\cdots +i_m=r$. 
 Therefore the remainder of $F({\mathcal{X}}_1, \ldots ,
{\mathcal{X}}_m)$ modulo ${\mathcal{B}}$ is non-zero. 
It is well known that if a
polynomial is reduced modulo a Gr\"{o}bner basis then the remainder is
zero if and only if it belongs to the ideal generated by the elements
in the basis.
\end{proof}

We now show that Theorem~\ref{prop-sz-gen} is tight. It follows of
course that so is Theorem~\ref{SZ-lemma} (a fact that has not been
stated in the literature). 
\begin{Proposition}
Let $S_1, \ldots , S_m \subseteq {\mathbf{F}}$ be finite sets. If
$F(\vec{X})\in {\mathbf{F}}[\vec{X}]$ is a product
of univariate linear factors  then the number of
zeros of $F$ counted with multiplicity reach the generalized Schwartz-Zippel bound
(Theorem~\ref{prop-sz-gen}).
\end{Proposition}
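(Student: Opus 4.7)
The plan is to exhibit, for any exponent vector $(i_1,\ldots,i_m)$, an explicit product of univariate linear factors whose leading monomial is $X_1^{i_1}\cdots X_m^{i_m}$ and whose total multiplicity over $S_1\times\cdots\times S_m$ equals the right-hand side of Theorem~\ref{prop-sz-gen}. The natural candidate is
$$F(\vec{X})=F_1(X_1)\cdots F_m(X_m),$$
where each $F_u$ is a monic univariate polynomial of degree $i_u$ all of whose roots (counted with multiplicity) lie in $S_u$. Writing $S_u=\{\alpha_1^{(u)},\ldots,\alpha_{s_u}^{(u)}\}$, I would pick any non-negative integers $r_1^{(u)},\ldots,r_{s_u}^{(u)}$ with $\sum_v r_v^{(u)}=i_u$ (for instance $r_1^{(u)}=i_u$ and the rest zero) and set $F_u(X_u)=\prod_{v=1}^{s_u}(X_u-\alpha_v^{(u)})^{r_v^{(u)}}$; such a choice is always possible, independently of $|\mathbf{F}|$.

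First I would verify that $F$ has the required leading monomial. Since leading monomials multiply under polynomial multiplication and the leading monomial of each $F_u$ is $X_u^{i_u}$, the leading monomial of $F$ under $\prec$ (and in fact under any monomial ordering) is $X_1^{i_1}\cdots X_m^{i_m}$, so Theorem~\ref{prop-sz-gen} applies with exactly these exponents.

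Next I would apply Proposition~\ref{pronuogsaa} point by point: at $\vec{a}=(\alpha_{j_1}^{(1)},\ldots,\alpha_{j_m}^{(m)})\in S_1\times\cdots\times S_m$ the multiplicity of $F$ equals $r_{j_1}^{(1)}+\cdots+r_{j_m}^{(m)}$. Summing over all such $\vec{a}$ and interchanging the order of summation,
$$\sum_{\vec{a}\in S_1\times\cdots\times S_m}{\mbox{mult}}(F,\vec{a})=\sum_{u=1}^{m}\Bigl(\prod_{u'\neq u}s_{u'}\Bigr)\sum_{v=1}^{s_u}r_v^{(u)}=\sum_{u=1}^{m}i_u\prod_{u'\neq u}s_{u'},$$
which is precisely $i_1s_2\cdots s_m+s_1i_2s_3\cdots s_m+\cdots+s_1\cdots s_{m-1}i_m$. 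Hence $F$ saturates Theorem~\ref{prop-sz-gen}, and the tightness of Theorem~\ref{SZ-lemma} follows at once since $F$ has total degree $i_1+\cdots+i_m$.

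There is no real obstacle: Proposition~\ref{pronuogsaa} converts the sum of multiplicities into a pure combinatorial identity, and the only mild subtlety is observing that each factor $F_u$ may be assigned its roots independently inside $S_u$ (trivial once we allow repeated roots). The main conceptual content is therefore already carried by Proposition~\ref{pronuogsaa}.
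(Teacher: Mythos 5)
Your proposal is correct and follows essentially the same route as the paper: take $F=\prod_u\prod_v(X_u-\alpha_v^{(u)})^{r_v^{(u)}}$, use Proposition~\ref{pronuogsaa} to read off the multiplicity $r_{j_1}^{(1)}+\cdots+r_{j_m}^{(m)}$ at each point, and interchange summations to obtain $i_1s_2\cdots s_m+\cdots+s_1\cdots s_{m-1}i_m$. The only cosmetic difference is that you phrase it as constructing a witness for each exponent vector while the paper computes the sum for an arbitrary such product; the computation is identical.
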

\begin{proof}
Consider the polynomial
$$F(\vec{X})=\prod_{u=1}^m \prod_{v=1}^{s_u}\big(X_u-\alpha_v^{(u)}\big)^{r_v^{(u)}}.$$
Write $i_u=\sum_{v=1}^{s_u}r_v^{(u)}$, $u=1, \ldots , m$. We have
\begin{align*}
\sum_{\vec{a} \in S_1 \times \cdots \times S_m}
{\mbox{mult}}(F,\vec{a})
&=\sum_{t=1}^{s_1}(s_2 \cdots s_m  )r_t^{(1)}+\cdots +
\sum_{t=1}^{s_m}(s_1 \cdots s_{m-1}  )r_t^{(m)}
\\
&=i_1s_2  \cdots s_{m}  +\cdots +s_1 
\cdots s_{m-1} i_m.
\qedhere
\end{align*}
\end{proof}

\begin{Example}\label{exbig}
Let $(i_1, \ldots , i_m, r, s_1,
\ldots ,s_m)$ be such that $\lfloor
i_1/s_1\rfloor+\cdots +\lfloor i_m/s_m\rfloor \geq r$.
 As mentioned in Remark~\ref{rembig} there exist
polynomials with the leading monomial being $X_1^{i_1} \cdots X_m^{i_m}$ such that all points in $S_1 \times \cdots \times S_m$ are
zeros of multiplicity at least $r$. To see this define $r_1=\lfloor i_1/s_1 \rfloor, \ldots ,
r_m=\lfloor i_m/s_m \rfloor$. Multiplying
$$\prod_{u=1}^m
\prod_{v=1}^{s_u}\big(X_u-\alpha_v^{(u)}\big)^{r_u}$$
by an appropriate monomial we get a polynomial having the prescribed
leading monomial (with respect to any monomial ordering). Clearly, all points in the ensemble are zeros of multiplicity at least $r$. 
\end{Example}
\begin{Definition}
Given $(i_1, \ldots , i_m,r, s_1, \ldots , s_m)$ consider the set of
polynomials that are products of univariate linear terms and have
$X_1^{i_1} \cdots X_m^{i_m}$ as leading monomial. By $H(i_1, \ldots
,i_m,r,s_1, \ldots , s_m)$ we denote the maximal number of zeros of
multiplicity at least $r$ that a polynomial from the above set can
have over $S_1 \times \cdots \times S_m$.
\end{Definition}
Based on Proposition~\ref{pronuogsaa} it is straightforward to
describe an iterative algorithm that finds $H(i_1, \ldots
, i_m,r,s_1, \ldots ,s_m)$. For the convenience of the reader we
include such an algorithm in Appendix~\ref{applin}.\\

In the previous sections we considered the general set of polynomials
$F$ with ${\mbox{lm}}_{\prec}(F)=X_1^{i_1} \cdots X_m^{i_m}$. We
gave upper bounds on the maximal attainable number of zeros of
multiplicity $r$ or more. It is clear that $H(i_1, \ldots , i_m,r,s_1,
\ldots ,s_m)$ serves as a lower bound on the maximal attainable number
of zeros of multiplicity $r$ or more. 
In particular 
$H(i_1, \ldots , i_m,r,s_1, \ldots , s_m) \leq D(i_1, \ldots ,
i_m,r,s_1, \ldots , s_m)$ holds. Experiments show (see~\cite{hmpage})
that the two functions are sometimes quite close. In the next
section we present various conditions under which the two functions
attain the same value. Clearly, when this happens we know what is the maximal number of
zeros of multiplicity at least $r$ that any polynomial with leading monomial
$X_1^{i_1} \cdots X_m^{i_m}$ can have over $S_1 \times \cdots \times S_m$.
\begin{Example}\label{exny4}
This is a continuation of Example~\ref{exny1} where we studied the
upper bound 
$D(i_1,i_2,3,5,5)$ for relevant choices of $i_1, i_2$. Using the
algorithm in Appendix~\ref{applin} we calculated the corresponding
values of the lower bound $H(i_1,i_2,3,5,5)$. We list in Table~\ref{tabny6} the
difference $D(i_1,i_2,3,5,5)-H(i_1,i_2,3,5,5)$. We see that for many
choices of $i_1,i_2$ the upper bound equals the lower bound.
\begin{table}
\centering
\caption{Difference between upper and lower bound in Example~\ref{exny4}}\label{tabny6}
\newcommand{\SP}{~\hspace*{0.851em}}
\begin{tabular}{@{}c@{}cc@{\SP}c@{\SP}c@{\SP}c@{\SP}c@{\SP}c@{\SP}c@{\SP}c@{\SP}c@{\SP}c@{~~~}c@{~~}c@{~~}c@{~~}c@{~~}c@{}}
    \toprule
    &&\multicolumn{15}{c}{$i_1$}\\
    &&0&1&2&3&4&5&6&7&8&9&10&11&12&13&14\\
    \addlinespace
    \multirow{15}{*}{$i_2$}
    &\multicolumn{1}{c}{0} &0&0&0&0&0&0&0&0&0&0&0&0&0&0&0\\
    &\multicolumn{1}{c}{1} &0&0&0&0&1&0&1&1&1&1&2&1&1&1&0\\
    &\multicolumn{1}{c}{2} &0&0&0&2&2&2&3&2&2&2&3&2&2&1&0\\
    &\multicolumn{1}{c}{3} &0&0&0&0&0&1&1&1&3&1&4&3&2&2&0\\
    &\multicolumn{1}{c}{4} &0&0&0&0&2&3&3&3&2&2&3&2&2&1&0\\
    &\multicolumn{1}{c}{5} &0&0&0&2&2&3&2&2&0&0\\
    &\multicolumn{1}{c}{6} &0&0&0&0&1&2&3&2&1&0\\
    &\multicolumn{1}{c}{7} &0&0&0&0&2&3&3&3&1&0\\
    &\multicolumn{1}{c}{8} &0&0&0&2&1&1&2&1&2&0\\
    &\multicolumn{1}{c}{9} &0&0&0&0&1&2&2&1&1&0\\
    &\multicolumn{1}{c}{10}&0&0&0&0&0\\
    &\multicolumn{1}{c}{11}&0&0&0&1&0\\
    &\multicolumn{1}{c}{12}&0&0&0&0&0\\
    &\multicolumn{1}{c}{13}&0&0&0&0&0\\
    &\multicolumn{1}{c}{14}&0&0&0&0&0\\
    \bottomrule
\end{tabular}
\end{table}
\end{Example}
\begin{Example}\label{exny5}
This is a continuation of Example~\ref{exny2} where we studied
$D(i_1,i_2,i_3=3,i_4=5,3,6,6,6,6,)$ for a collection of values
$i_1,i_2$. In Table~\ref{tabny7} we list the difference between these
upper bounds and the lower bounds $H(i_1,i_2,i_3=3,i_4=5,3,6,6,6,6)$.

\begin{table}
\centering
\caption{Difference between upper and lower bound in Example~\ref{exny5}}
\newcommand{\SP}{~\hspace*{0.851em}}
\begin{tabular}{@{}c@{~~}c@{\SP}c@{\SP}c@{\SP}c@{\SP}c@{\SP}c@{\SP}c@{\SP}c@{\SP}c@{}}
    \toprule
    & &\multicolumn{8}{c}{$i_1$} \\
    & &   0   &   1   &   2   &    3   &    4   &    5   &    6   &    7   \\
    \addlinespace
    \multirow{8}{*}{$i_2$}
    &0& 72& 60& 48& 96& 90&114&216&186\\
    &1& 60& 50& 60& 80&109&143&248&217\\
    &2& 48& 60& 53&104&133&179&265&190\\
    &3& 96& 80& 70&100&120&143&213&150\\
    &4& 90& 88&106&111&112&114&168&120\\
    &5&114&129&155&111& 82& 81&117& 84\\
    &6&216&196&201&112& 52& 54& 72& 54\\
    &7&186&181&146& 81& 40& 42& 57& 42\\
    \bottomrule
\end{tabular}
\label{tabny7}
\end{table}
\end{Example}
\begin{Example}
Let $s_1=\cdots =s_m=q$. Our experiments listed in~\cite{hmpage} show that  $D(i_1, \ldots ,
i_m,r,q,\ldots ,q)$ is often close to $H(i_1, \ldots , i_m,r,q,\ldots
,q)$. In Table~\ref{tabheart} we list the mean value of
\begin{equation}
\frac{D(i_1, \ldots , i_m,r,q,\ldots , q)-H(i_1, \ldots , i_m,r,q,\ldots , q)}{\frac{1}{2}\big(D(i_1, \ldots , i_m,r,q,\ldots , q)+H(i_1, \ldots , i_m,r,q,\ldots , q)\big)}.\label{eqheart}
\end{equation}
The average is taken over the set of exponents with 
$\lfloor i_1/q\rfloor+\cdots +\lfloor i_m/q\rfloor 
< r$  and $D(i_1,\ldots,i_m,r,q,\ldots, q)\neq 0$.
\begin{table}[!h]
\centering
\caption{Mean value of (\ref{eqheart}); rounded up}
\newcommand{\SP}{~~}
\begin{tabular}{@{}c@{}c@{~~~~}r@{.}l@{\SP}r@{.}l@{\SP}r@{.}l@{\SP}r@{.}l@{\SP}r@{.}l@{\SP}r@{.}l@{\SP}r@{.}l@{\SP}r@{.}l@{\SP}r@{.}l@{\SP}r@{.}l@{}}
\toprule
$m$&& \multicolumn{8}{c}{2} & \multicolumn{8}{c}{3} & \multicolumn{4}{c}{4} \\
\cmidrule(r){3-10} \cmidrule(r){11-18} \cmidrule{19-22}
$r$&&\multicolumn{2}{l}{2}&\multicolumn{2}{l}{3}&\multicolumn{2}{l}{4}&\multicolumn{2}{l}{5}&\multicolumn{2}{l}{2}&\multicolumn{2}{l}{3}&\multicolumn{2}{l}{4}&\multicolumn{2}{l}{5}&\multicolumn{2}{l}{2}&\multicolumn{2}{l}{3}\\
\addlinespace
\multirow{7}{*}{$q$}
&\multicolumn{1}{c}{2}&0&044&0&066&0&08 &0&088&0&048&0&085&0&106&0&120&0&041&0&081\\
&\multicolumn{1}{c}{3}&0&039&0&048&0&068&0&075&0&046&0&067&0&092&0&104&0&038&0&064\\
&\multicolumn{1}{c}{4}&0&044&0&057&0&054&0&067&0&049&0&075&0&083&0&098&0&037&0&068\\
&\multicolumn{1}{c}{5}&0&042&0&057&0&061&0&060&0&045&0&073&0&086&0&092&0&034&0&064\\
&\multicolumn{1}{c}{6}&0&041&0&057&0&065&0&066&0&043&0&072&0&087&0&095&0&031&0&061\\
&\multicolumn{1}{c}{7}&0&040&0&054&0&063&0&066&0&042&0&069&0&085&0&094&0&030&0&058\\
&\multicolumn{1}{c}{8}&0&038&0&053&0&062&0&067&0&040&0&067&0&083&\multicolumn{2}{l}{?}&0&029&0&056\\
\bottomrule
\end{tabular}
\label{tabheart}
\end{table}
\end{Example}
{\section{Some conditions for  $H = D$ to hold}\label{secequal}}
As the polynomial ring in one variable is a unique factorization
domain we get $H(i_1,r,s_1)=D(i_1,r,s_m)$ for all choices of $i_1, r,
s_1$. Experiments suggest (see~\cite{hmpage}) that for two
variables we have a similar equality for certain systematic choices of
$i_1, i_2$. For other choices of $i_1, i_2$ the picture is more
blurred. The results of our experiments further suggest that it might not be
an easy task 
to say much about which values of $(i_1, \ldots . i_m, r, s_1, \ldots
, s_m)$ causes equality when $m \geq 3$. We summarize our
findings below.
\begin{Proposition}
For $\frac{r}{r+1}s_1 \leq i_1 < s_1$, $(r-1)s_2 \leq i_2 <rs_2$ we have
\begin{eqnarray}
H(i_1, i_2,r,s_1,s_2)=D(i_1,i_2,r,s_1,s_2) =rs_2i_1+i_2s_1-i_1i_2-(r-1)s_1s_2.\nonumber 
\end{eqnarray}
\end{Proposition}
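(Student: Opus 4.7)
The plan is to prove the upper bound on $D$ via case (C.2) of Proposition~\ref{protwovar} with a suitable choice of $k$, and to prove the lower bound on $H$ by exhibiting an explicit polynomial that is a product of univariate linear terms and achieves exactly the claimed number of zeros of multiplicity at least $r$. Since $H \leq D$ always holds, the two matching bounds give the full statement.

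For the upper bound, observe that the hypotheses $\frac{r}{r+1}s_1 \leq i_1 < s_1$ and $(r-1)s_2 \leq i_2 < rs_2$ correspond precisely to case (C.2) of Proposition~\ref{protwovar} with the choice $k=r-1$ (so $r-k=1$ and $(r-k)\frac{r}{r+1}s_1 = \frac{r}{r+1}s_1$, $(r-k)s_1 = s_1$, $ks_2=(r-1)s_2$, $(k+1)s_2=rs_2$). Substituting $k=r-1$ into the formula from (C.2) gives
\[
D(i_1,i_2,r,s_1,s_2) \leq s_2\tfrac{i_1}{r} + (rs_2-i_2)\bigl(i_1-\tfrac{i_1}{r}\bigr) + (i_2-(r-1)s_2)\bigl(s_1-\tfrac{i_1}{r}\bigr),
\]
and a straightforward expansion collapses this to $rs_2i_1+i_2s_1-i_1i_2-(r-1)s_1s_2$.

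For the lower bound, the key identity is that the claimed formula can be rewritten as $s_1s_2-(s_1-i_1)(rs_2-i_2)$. Set $a=s_1-i_1$ and $c=i_2-(r-1)s_2$, so $b=rs_2-i_2=s_2-c$, and note that $a\geq 1$ and $0\leq c<s_2$. Choose distinct $\alpha_1^{(1)},\dots,\alpha_{s_1}^{(1)}\in S_1$ and $\alpha_1^{(2)},\dots,\alpha_{s_2}^{(2)}\in S_2$, and consider
\[
F(X_1,X_2)=\prod_{v=1}^{s_1-a}\bigl(X_1-\alpha_v^{(1)}\bigr)\cdot\prod_{v=1}^{s_2-c}\bigl(X_2-\alpha_v^{(2)}\bigr)^{r-1}\cdot\prod_{v=s_2-c+1}^{s_2}\bigl(X_2-\alpha_v^{(2)}\bigr)^{r}.
\]
Since $F$ is a product of monic univariate linear factors, its leading monomial under any monomial ordering is $X_1^{(s_1-a)}X_2^{(r-1)(s_2-c)+rc}=X_1^{i_1}X_2^{i_2}$, as required.

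By Proposition~\ref{pronuogsaa}, the multiplicity of $F$ at $(\alpha_{j_1}^{(1)},\alpha_{j_2}^{(2)})$ equals $r_{j_1}^{(1)}+r_{j_2}^{(2)}$. For $j_1\leq s_1-a$ (so $r_{j_1}^{(1)}=1$) every $j_2$ gives multiplicity at least $1+(r-1)=r$, contributing $(s_1-a)s_2$ zeros. For $j_1>s_1-a$ (so $r_{j_1}^{(1)}=0$) multiplicity at least $r$ occurs only for the $c$ values $j_2>s_2-c$, contributing $ac$ more zeros. The total is $(s_1-a)s_2+ac=s_1s_2-a(s_2-c)=s_1s_2-ab$, matching the formula. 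Combining $H\geq s_1s_2-ab$ with the upper bound $D\leq s_1s_2-ab$ and the general inequality $H\leq D$ from Section~\ref{seclin} closes the proof.

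I do not anticipate a real obstacle here: the upper bound reduces to plugging $k=r-1$ into an already-established formula, and the matching lower bound comes from a transparent product-of-linear-factors construction whose multiplicities are given directly by Proposition~\ref{pronuogsaa}. The only mildly delicate point is recognizing the right rewriting $s_1s_2-(s_1-i_1)(rs_2-i_2)$, which suggests precisely the partition of the exponents on the $X_2$-factors into blocks of size $r-1$ and $r$ used above.
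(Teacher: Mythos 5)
Your proof is correct and follows essentially the same route as the paper: the upper bound on $D$ comes from case (C.2) of Proposition~\ref{protwovar} with $k=r-1$, and the lower bound on $H$ comes from an explicit product of univariate linear factors whose multiplicities are read off via Proposition~\ref{pronuogsaa}. Your version is in fact written out more carefully than the paper's (whose displayed polynomial appears to contain $s_1$ rather than $i_1$ linear factors in $X_1$, which would make every point a zero of multiplicity at least $r$; your choice of $i_1$ factors is the intended one).
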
 
\begin{proof}
The value of $D$ is upper bounded by (C.2)
in Proposition~\ref{protwovar}. The value of $H$ is lower bounded by
studying the zeros of
\begin{multline*}
(X_2-\alpha_1^{(2)})^r\cdots
(X_2-\alpha_w^{(2)})^r(X_2-\alpha_{w+1}^{(2)})^{r-1} \cdots
\\
(X_2-\alpha_{s_2}^{(2)})^{r-1}(X_1-\alpha_1^{(1)})\cdots
(X_1-\alpha_{s_1}^{(1)})
\end{multline*}
where $w=i_2-(r-1)s_2$.
\end{proof}

We leave the proofs of the following two results for the reader.
\begin{Proposition}
Assume $r \leq s_1$. If $0 \leq i_1 < r$ and $0 \leq i_2 < rs_2$ holds
then
$$H(i_1,i_2,r,s_1,s_2)=D(i_1,i_2,r,s_1,s_2)=\lfloor i_2/r\rfloor
s_2+\delta$$
where $\delta=i_1-(r-w)+1$ if $r-w \leq i_1$ and $\delta=0$ otherwise.
\end{Proposition}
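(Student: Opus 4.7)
The plan is to prove the two directions $D(i_1,i_2,r,s_1,s_2) \leq \lfloor i_2/r\rfloor s_2 + \delta$ and $H(i_1,i_2,r,s_1,s_2) \geq \lfloor i_2/r\rfloor s_2 + \delta$ separately; the intermediate inequality $H \leq D$ was already recorded in the discussion preceding Example~\ref{exny4}. Throughout, write $i_2 = qr + w$ with $q = \lfloor i_2/r\rfloor$ and $0 \leq w < r$.

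For the upper bound I would unfold Definition~\ref{defD} with $m = 2$. Since $i_1 < r \leq s_1$, the one-variable values simplify: $D(i_1, j, s_1) = \lfloor i_1/j \rfloor$ for $1 \leq j \leq i_1$ and $D(i_1, j, s_1) = 0$ for $j > i_1$, so in particular $D(i_1, 1, s_1) = i_1$ and $D(i_1, r, s_1) = 0$. Substituting, $D(i_1, i_2, r, s_1, s_2)$ becomes the value of the two-constraint integer LP
\[
\max\Big\{u_r s_1 + \textstyle\sum_{j=1}^{r-1} u_{r-j}\lfloor i_1/j\rfloor : (u_1,\ldots,u_r) \in A(i_2,r,s_2)\Big\}.
\]
The key inequality $s_1/r \geq \lfloor i_1/j\rfloor/(r-j)$ for $1 \leq j \leq r-1$ (which follows from $s_1 \geq r$ and $j(r-j) \geq r - 1 \geq i_1$) shows that the per-budget reward of the $u_r$-coefficient dominates every competitor, so any maximizer begins by setting $u_r = q$. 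The residual budget $w$ is then spent on the $u_{r-j}$ with $j \leq w$; the condition $r - w \leq i_1$ is precisely what guarantees at least one non-zero coefficient is reachable, and a short case analysis --- handling the sub-cases $w \geq r - i_1$ versus $w < r - i_1$ --- matches the extra contribution to the claimed $\delta$.

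For the lower bound I would exhibit a polynomial with leading monomial $X_1^{i_1}X_2^{i_2}$ attaining the claim. Choose distinct $\alpha_1^{(1)},\ldots,\alpha_{i_1}^{(1)} \in S_1$ and $\alpha_1^{(2)},\ldots,\alpha_{q+1}^{(2)} \in S_2$, and set
\[
F(\vec{X}) = \prod_{u=1}^{i_1}(X_1 - \alpha_u^{(1)}) \cdot \prod_{v=1}^{q}(X_2 - \alpha_v^{(2)})^r \cdot (X_2-\alpha_{q+1}^{(2)})^w.
\]
Because $F$ is a product of univariate linear factors, Proposition~\ref{pronuogsaa} tells us that the multiplicity at $(\alpha_j^{(1)},\alpha_k^{(2)})$ equals the sum of the exponents of the factors vanishing at that point. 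A direct count then gives $s_1 q$ zeros of multiplicity at least $r$ from the first $q$ values of $X_2$, and in the case $r - w \leq i_1$ the pairs $(\alpha_u^{(1)},\alpha_{q+1}^{(2)})$ each attain multiplicity $1 + w \geq r$, supplying the additional $\delta$ zeros; if the optimal LP configuration instead uses several partial $X_2$-slots, $w$ is split across additional lower-exponent $X_2$-factors and Proposition~\ref{pronuogsaa} is re-applied to the modified $F$. The main obstacle will be the discrete case analysis inside the upper-bound LP together with the corresponding tuning of the polynomial construction in the lower bound, since the extremal configuration shifts depending on the relative sizes of $w$, $r - i_1$, and $s_2 - q$.
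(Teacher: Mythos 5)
First, a point of reference: the paper offers \emph{no} proof of this proposition --- it is one of the two results whose proofs are explicitly ``left for the reader'' --- so your argument has to stand entirely on its own. Your skeleton is the right one (unfold Definition~\ref{defD} for $m=2$ using $D(i_1,j,s_1)=\lfloor i_1/j\rfloor$, reduce the upper bound to a two-constraint integer program, and match it from below with a product of univariate linear factors via Proposition~\ref{pronuogsaa}), and your key inequality $s_1/r\ge\lfloor i_1/j\rfloor/(r-j)$ is correct. But the proof is incomplete exactly where the content lies, and the deferred steps cannot be closed as described. The entire substance of the proposition is the evaluation of the residual optimization after $u_r=q$ is fixed, and you postpone precisely this to ``a short case analysis''. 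Under your reading $w=i_2\bmod r$ (note $w$ is never defined in the statement; it appears only in the proof of the \emph{preceding} proposition as $w=i_2-(r-1)s_2$), that case analysis does not yield $\delta=i_1-(r-w)+1$. Take $r=5$, $i_1=3$, $w=3$: the affordable terms have rewards $\lfloor 3/4\rfloor=0$ (cost $1$), $\lfloor 3/3\rfloor=1$ (cost $2$), $\lfloor 3/2\rfloor=1$ (cost $3$), and with budget $3$ the optimum is $1$, whereas the claimed $\delta$ is $3-2+1=2$. Concretely, $D(3,8,5,5,5)=6$ by direct computation from Definition~\ref{defD}, while the stated formula gives $7$. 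So either the statement needs repairing before it can be proved, or your interpretation of $w$ is not the intended one; in either case you silently commit to one reading and the computation then fails.

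The lower bound contains an independent error. For your $F$ the multiplicity at $(\alpha_u^{(1)},\alpha_{q+1}^{(2)})$ is $1+w$, which is $\ge r$ if and only if $w\ge r-1$, \emph{not} if and only if $r-w\le i_1$: for $r=6$, $i_1=4$, $w=2$ your condition holds but $1+w=3<6$. Moreover, with all $X_1$-exponents equal to $1$ either all $i_1$ of those points qualify or none do, so nothing in the construction produces exactly $\delta=i_1-(r-w)+1$ of them; to pick up anything at $\alpha_{q+1}^{(2)}$ one must concentrate the $X_1$-exponents into parts of size at least $r-w$, and the resulting count is governed by $\lfloor i_1/(r-w)\rfloor$, which again disagrees with the stated $\delta$ (it equals $2$, not $3$, for $r=6$, $i_1=4$, $w=4$). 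Finally, your own count gives $qs_1$ zeros from the first $q$ values of $X_2$, while the target formula reads $\lfloor i_2/r\rfloor s_2$; you never reconcile the two (almost certainly $s_2$ is a typo for $s_1$, but a complete proof must say so and fix it). In short: the reduction to the integer program and the linear-factor construction are the correct tools, but the two steps you defer are where the proposition actually lives, and as set up they do not close --- indeed they expose that the statement, read literally, is false.
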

\begin{Proposition}
If $H(i_1, \ldots , i_m,r,s_1, \ldots ,s_m)=D(i_1, \ldots , i_m,r,s_1,
\ldots ,s_m)$ then
\begin{align*}
&H(i_1, \ldots , i_t,0,i_{t+1}, \ldots , i_m,r,s_1, \ldots ,
s_t,s^{\prime},s_{t+1}, \ldots  ,s_m)\\
&=D(i_1, \ldots , i_t,0,i_{t+1}, \ldots , i_m,r,s_1, \ldots ,
s_t,s^{\prime},s_{t+1}, \ldots  ,s_m)\\
&=s^{\prime}D(i_1, \ldots , i_m,r,s_1, \ldots ,s_m).
\end{align*}
\end{Proposition}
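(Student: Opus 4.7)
The plan is to decouple the conclusion into the two identities $D(\tilde{\sigma}) = s^{\prime} D(\sigma)$ and $H(\tilde{\sigma}) = s^{\prime} H(\sigma)$, where $\sigma$ denotes the original tuple $(i_1,\ldots,i_m,r,s_1,\ldots,s_m)$ and $\tilde{\sigma}$ the enlarged tuple $(i_1,\ldots,i_t,0,i_{t+1},\ldots,i_m,r,s_1,\ldots,s_t,s^{\prime},s_{t+1},\ldots,s_m)$. Once both identities are in place, the hypothesis $H(\sigma)=D(\sigma)$ yields $H(\tilde{\sigma}) = s^{\prime} H(\sigma) = s^{\prime} D(\sigma) = D(\tilde{\sigma})$, which is the full claim.

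For the $D$-identity I would induct on $m-t$, that is, on the number of real variables lying to the right of the inserted slot. In the base case $t=m$ the slot is at the very end, so applying Definition~\ref{defD} forces a maximisation over $A(0,r,s^{\prime})$. Because $u_1+2u_2+\cdots+ru_r\leq 0$ forces $u_1=\cdots=u_r=0$, the only feasible vector is $\vec{0}$ and the recursion collapses to $(s^{\prime}-0)\,D(\sigma)=s^{\prime}D(\sigma)$. In the inductive step $t<m$, I apply the recursion of $D$ to $\tilde{\sigma}$ and to $\sigma$ simultaneously on the genuine last coordinate (namely $i_m$, $s_m$); both maximisations run over the same set $A(i_m,r,s_m)$. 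Each inner $D$-value appearing in the formula for $\tilde{\sigma}$ is a $D$ evaluated at a reduced tuple that still carries the inserted zero, so the induction hypothesis, stated uniformly for every multiplicity $r, r-1,\ldots,1$, extracts a common factor $s^{\prime}$. The boundary term $u_r s_1\cdots s_t\, s^{\prime}\, s_{t+1}\cdots s_{m-1}$ is plainly $s^{\prime}$ times the corresponding term in $\sigma$, so $s^{\prime}$ factors outside the maximum and the identity follows.

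For the $H$-identity I would use the combinatorial characterisation furnished by Proposition~\ref{pronuogsaa}. Any product of univariate linear terms whose leading monomial has $X_{t+1}$-exponent equal to $0$ cannot involve a single factor of the form $(X_{t+1}-\alpha)$, since the total degree of such a product in $X_{t+1}$ is precisely $\sum_v r_v^{(t+1)}$. Consequently the multiplicity at $(\alpha_{j_1}^{(1)},\ldots,\alpha_{j_{m+1}}^{(m+1)})$ computed by Proposition~\ref{pronuogsaa} is independent of $j_{t+1}$. This makes the zero locus of multiplicity at least $r$ over $S_1\times\cdots\times S^{\prime}\times\cdots\times S_m$ a Cartesian product with $S^{\prime}$, whose cardinality is exactly $s^{\prime}$ times the analogous count over the ensemble with the inserted factor removed, and maximising over admissible polynomials yields $H(\tilde{\sigma})=s^{\prime}H(\sigma)$.

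The main point of care will be the bookkeeping inside the $D$-induction: one must verify that applying the recursion to $\tilde{\sigma}$ really does leave the inserted zero untouched and uses the same data $A(i_m,r,s_m)$ as $\sigma$, and that the boundary term, which is not itself a $D$-value and hence not directly covered by the induction hypothesis, nonetheless scales by exactly $s^{\prime}$. Everything else is routine unpacking of Definitions~\ref{defD} and the combinatorial meaning of $H$.
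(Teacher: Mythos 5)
Your argument is correct. Note, however, that the paper offers no proof to compare against: this proposition is one of the two results explicitly ``left for the reader,'' so the only internal evidence is the unproved claim at the end of Section~3 that $D(i_1,\ldots,i_m,r,s_1,\ldots,s_m)=\big(\prod_{i_d=0}s_d\big)D(i_{t_1},\ldots,i_{t_v},r,s_{t_1},\ldots,s_{t_v})$, of which your $D$-identity (and its induction) is precisely the single-insertion case. Your two halves both check out: for $D$, the set $A(0,r,s')$ indeed collapses to $\{\vec{0}\}$ in the base case, and in the inductive step every inner $D$-value and the boundary term $u_r s_1\cdots s_t s' s_{t+1}\cdots s_{m-1}$ scale by exactly $s'$, so the factor passes through the maximum over the common index set $A(i_m,r,s_m)$; for $H$, a product of univariate linear factors whose leading monomial omits the new variable contains no factor in that variable, so by Proposition~\ref{pronuogsaa} its multiplicity is independent of the inserted coordinate and the zero set is a Cartesian product with $S'$. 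The only loose ends worth tidying are cosmetic: state the induction hypothesis uniformly over all multiplicities $l=1,\ldots,r$ (as you indicate), and check the leftmost-insertion corner $t=0$, $m=1$ separately, since there the stripped inner tuples are $D(0,l,s')=0$ and the ``empty'' $D$ must be read as $0$ with empty product $1$ for the bookkeeping to close.
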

\begin{Proposition}
Assume $(i_1, \ldots  ,i_m,r,s_1, \ldots , s_m)$ satisfies Condition A
(Definition~\ref{defconda}) and that $r$ divides $i_1, \ldots ,
i_m$. Then
\begin{align}
H(i_1, \ldots , i_m,r,s_1, \ldots ,s_m)
&=D(i_1, \ldots , i_m,r,s_1, \ldots ,s_m) \nonumber \\
&=s_1\cdots s_m-(s_1-\frac{i_1}{r}) \cdots (s_m-\frac{i_m}{r}). \nonumber
\end{align}
\end{Proposition}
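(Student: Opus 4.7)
The plan is to get both bounds by sandwiching, using Proposition~\ref{prosmall} for the upper direction and an explicit product of linear terms for the lower direction, so they meet at the stated common value.

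First, since Condition~A is assumed, Proposition~\ref{prosmall} already gives the upper bound
\begin{equation*}
D(i_1,\ldots,i_m,r,s_1,\ldots,s_m)\leq s_1\cdots s_m-(s_1-i_1/r)\cdots(s_m-i_m/r).
\end{equation*}
Combined with $H\leq D$ (noted at the end of Section~\ref{seclin}), it remains only to exhibit a product of univariate linear terms whose leading monomial is $X_1^{i_1}\cdots X_m^{i_m}$ and that has at least $s_1\cdots s_m-\prod_u(s_u-i_u/r)$ zeros of multiplicity $\geq r$ over $S_1\times\cdots\times S_m$.

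For the lower bound, set $k_u=i_u/r\in\mathbf{N}_0$ (using the hypothesis $r\mid i_u$) and take
\begin{equation*}
F(\vec X)=\prod_{u=1}^m\prod_{v=1}^{k_u}\bigl(X_u-\alpha_v^{(u)}\bigr)^{r}.
\end{equation*}
The degree in $X_u$ is $rk_u=i_u$, so $F$ has leading monomial $X_1^{i_1}\cdots X_m^{i_m}$ with respect to any monomial ordering. Applying Proposition~\ref{pronuogsaa} with $r_v^{(u)}=r$ for $v\leq k_u$ and $r_v^{(u)}=0$ for $v>k_u$, the multiplicity at the grid point $(\alpha_{j_1}^{(1)},\ldots,\alpha_{j_m}^{(m)})$ equals
\begin{equation*}
\sum_{u=1}^m r_{j_u}^{(u)}=r\cdot\bigl|\{u:j_u\leq k_u\}\bigr|,
\end{equation*}
which is at least $r$ precisely when $j_u\leq k_u$ for at least one $u$. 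The complementary set, where $j_u>k_u$ for all $u$, has cardinality $\prod_u(s_u-k_u)$, so $F$ has exactly $s_1\cdots s_m-\prod_u(s_u-i_u/r)$ zeros of multiplicity at least $r$, giving the matching lower bound for $H$.

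The only bit of care needed is that the inner product $\prod_{v=1}^{k_u}$ makes sense, i.e.\ that $k_u\leq s_u$ for every $u$. For $r\geq 2$ this is immediate from (\ref{eqeqeq}) in Proposition~\ref{prosmall} for $u<m$ (giving $k_u\leq s_u/(r+1)\leq s_u$) and from (A.1) for $u=m$; the case $r=1$ reduces to the hypothesis $i_u\leq s_u$ built into Condition~A. I anticipate this bookkeeping will be the only delicate point; once the polynomial $F$ is in hand, the counting via Proposition~\ref{pronuogsaa} is mechanical and yields the equality
\begin{equation*}
H=D=s_1\cdots s_m-(s_1-i_1/r)\cdots(s_m-i_m/r).
\end{equation*}
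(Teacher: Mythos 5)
Your proof is correct and is essentially the paper's own argument: the paper likewise proves the claim by exhibiting the polynomial $\prod_{j=1}^m\prod_{v=1}^{i_j/r}(X_j-\alpha_v^{(j)})^r$ for the lower bound on $H$ and invoking Proposition~\ref{prosmall} together with $H\leq D$ for the upper bound. You have simply spelled out the multiplicity count via Proposition~\ref{pronuogsaa} and the bookkeeping $i_u/r\leq s_u$, which the paper leaves implicit.
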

\begin{proof}
Consider
$$\prod_{j=1}^m\prod_{v=1}^{i_j/r} (X_j-\alpha_v^{(j)})^r$$
and apply Proposition~\ref{prosmall}.
\end{proof}

{\section{Acknowledgments}\label{hallihallo}}
The authors wish to thank T.\ Mora, P.\ Beelen, D.\ Ruano
and T.\ H{\o}holdt for pleasant discussions. Thanks to L.\ Grubbe Nielsen
for linguistical assistance.
\appendix
{\section{An algorithm to calculate $H$}\label{applin}}
We here give the details of the algorithm mentioned in
Section~\ref{seclin}.
\begin{Definition}
Consider vectors $$\vec{v}^{(1)}=(v_1^{(1)}, \ldots  ,v_r^{(1)}),
\ldots , \vec{v}^{(m)}=(v_1^{(m)}, \ldots  ,v_r^{(m)})\in
{\mathbf{N}}_0^r.$$
Let $s_1, \ldots ,s_m\in {\mathbf{N}}$. Define for $k=1,\ldots ,r$
$$\tilde{H}(\vec{v}^{(1)},k,s_1)=v_{k}^{(1)}+\cdots + v_r^{(1)}$$
and for $k \leq r$ and $m\geq 2$
\begin{multline*}
\tilde{H}(\vec{v}^{(1)}, \ldots , \vec{v}^{(m)},k,s_1, \ldots ,
s_m)=\\
\begin{split}
&\big[ s_m - (v_1^{(m)}+\cdots +v_r^{(m)})\big] \tilde{H}(\vec{v}^{(1)},
\ldots , \vec{v}^{(m-1)},k, s_1, \ldots , s_{m-1})\\
&+v_1^{(m)}\tilde{H}(\vec{v}^{(1)},
\ldots , \vec{v}^{(m-1)},k-1, s_1, \ldots , s_{m-1})\\
&+\cdots +  v_{k-1}^{(m)}\tilde{H}(\vec{v}^{(1)},
\ldots , \vec{v}^{(m-1)},1, s_1, \ldots , s_{m-1})\\
&+\tilde{H}(\vec{v}^{(m)},k,s_m)s_1 \cdots s_{m-1}.
\end{split} 
\end{multline*} 
\end{Definition}
\begin{Proposition}
\begin{multline*}
H(i_1,\ldots ,i_m,r,s_1, \ldots , s_m)=
\\
\max \bigg\{ \tilde{H}(\vec{v}^{(1)},
\ldots , \vec{v}^{(m)},r,s_1, \ldots , s_m) \bigg|
v_1^{(t)}+\cdots +v_r^{(t)} \leq s_t,\\
v_1^{(t)}+2v_2^{(t)}+\cdots +rv_r^{(t)}=i_t, {\mbox{ \ for \ }} t=1,
\ldots , m \bigg\}
\end{multline*}
\end{Proposition}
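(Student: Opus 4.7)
The plan is to parametrize products-of-linears with prescribed leading monomial by vectors of multiplicity counts, verify that $\tilde H$ recursively counts zeros of multiplicity at least $r$ for such a choice, and then take a maximum over admissible parameters.

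First I would fix notation: by Proposition~\ref{pronuogsaa} any $F(\vec X)=\prod_{u=1}^m\prod_{v=1}^{s_u}(X_u-\alpha_v^{(u)})^{r_v^{(u)}}$ has multiplicity $\sum_{u}r_{j_u}^{(u)}$ at $(\alpha_{j_1}^{(1)},\ldots,\alpha_{j_m}^{(m)})$, while the leading-monomial condition amounts to $\sum_{v}r_v^{(u)}=i_u$ for each $u$. I would then reduce to the case $r_v^{(u)}\le r$ for all $u,v$. In the regime $i_u\le r s_u$ (the only one relevant, as the complementary case is trivially covered by Remark~\ref{rembig}) whenever some $r_v^{(u)}=r+e$ with $e\ge 1$ occurs, there is by pigeonhole either an unused root $\alpha_{v'}^{(u)}$ or a root with multiplicity strictly less than $r$, and transferring the excess $e$ to that root preserves $i_u$ and, by Proposition~\ref{pronuogsaa}, cannot decrease the number of zeros of multiplicity at least $r$: points with $j_u=v$ still see contribution at least $r$, while points with $j_u=v'$ now see an additional positive contribution. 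Hence the supremum defining $H$ is attained in this subfamily.

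Having made this reduction, define $v_k^{(u)}=\#\{v:r_v^{(u)}=k\}$ for $k=1,\ldots,r$. The identities $\sum_{k=1}^{r}kv_k^{(u)}=i_u$ and $\sum_{k=1}^{r}v_k^{(u)}\le s_u$ are immediate, and conversely any tuple satisfying these constraints is realized by an essentially unique polynomial in our family. The proposition therefore reduces to proving, by induction on $m$, the combinatorial identity
$$\tilde H\bigl(\vec v^{(1)},\ldots,\vec v^{(m)},k,s_1,\ldots,s_m\bigr)=\#\bigl\{(j_1,\ldots,j_m):r_{j_1}^{(1)}+\cdots+r_{j_m}^{(m)}\ge k\bigr\}$$
for all $k=1,\ldots,r$. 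The base case $m=1$ is exactly the definition $\tilde H(\vec v^{(1)},k,s_1)=v_k^{(1)}+\cdots+v_r^{(1)}$, which counts the $j_1$ with $r_{j_1}^{(1)}\ge k$ since $r_v^{(1)}\le r$.

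For the inductive step, partition the right-hand count according to the value $d=r_{j_m}^{(m)}$: for $d=0,1,\ldots,k-1$ there are $v_d^{(m)}$ admissible choices of $j_m$ and the prefix must contribute at least $k-d$, yielding $v_d^{(m)}\tilde H(\vec v^{(1)},\ldots,\vec v^{(m-1)},k-d,s_1,\ldots,s_{m-1})$ tuples by the induction hypothesis; for $d\ge k$ there are $\tilde H(\vec v^{(m)},k,s_m)=v_k^{(m)}+\cdots+v_r^{(m)}$ choices of $j_m$ and any of the $s_1\cdots s_{m-1}$ prefixes is allowed. Using $v_0^{(m)}=s_m-(v_1^{(m)}+\cdots+v_r^{(m)})$, these contributions match the recursive definition of $\tilde H$ term by term. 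Specialising to $k=r$ and maximising over the admissible $(\vec v^{(1)},\ldots,\vec v^{(m)})$ then yields the claimed formula for $H$. The main delicate point is the WLOG reduction to $r_v^{(u)}\le r$, without which the encoding gives $\sum_k kv_k^{(u)}\le i_u$ rather than the equality that the proposition requires; once that is established the remainder is a direct unwinding of the recursion.
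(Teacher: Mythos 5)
Your argument is correct, and it is exactly the route the paper intends: the paper actually states this proposition without proof (Section~\ref{seclin} only remarks that an algorithm for $H$ follows ``straightforwardly'' from Proposition~\ref{pronuogsaa}), so there is nothing to compare against beyond that hint. Your identification of $\tilde{H}(\vec{v}^{(1)},\ldots,\vec{v}^{(m)},k,s_1,\ldots,s_m)$ with the number of points of multiplicity at least $k$, proved by induction on $m$ via conditioning on $d=r_{j_m}^{(m)}$, matches the recursive definition term by term, and you rightly isolate the reduction to $r_v^{(u)}\leq r$ as the only delicate point. Two small repairs are worth making explicit. First, a product of univariate linear terms may contain factors $(X_u-\beta)$ with $\beta\notin S_u$; these contribute to $i_u$ but to no zero of the point ensemble, so your parametrization with $\sum_v r_v^{(u)}=i_u$ taken over roots in $S_u$ silently excludes them. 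The fix is the same transfer argument you already use: in the relevant regime $i_u<rs_u$ some root of $S_u$ has multiplicity below $r$, and reassigning the stray degree to it preserves the leading monomial and cannot decrease any point's multiplicity. Second, transferring an excess $e$ onto a root of multiplicity $r_{v'}^{(u)}<r$ may push that root above $r$ when $e>r-r_{v'}^{(u)}$; one should iterate, noting that the total excess $\sum_v\max(r_v^{(u)}-r,0)$ strictly decreases at each step, so the process terminates. With these two sentences added, the proof is complete.
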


{\section{Comparison of Theorem~\ref{SZ-lemma} to a bound by
  Pellikaan and Wu}\label{seccomp}}
\noindent
As mentioned in the introduction for $S_1=\cdots =S_m={\mathbf{F}}_q$ there is an
alternative to Dvir et al.'s method from~\cite{dvir}, namely the
method by Pellikaan and Wu in~\cite{pw_ieee} and
\cite{pw_expanded}. We conclude the paper by showing that this other
approach is never better than Corollary~\ref{corsz}. Thus the
results in the present paper are the best known results.\\

In~\cite{pw_ieee} Pellikaan and Wu presented two algorithms
for decoding generalized Reed-Muller codes. The first algorithm is
based on 
the fact that generalized Reed-Muller codes can be viewed as subfield
subcodes of Reed-Solomon codes whereas the second algorithm is a
straightforward generalization of the Guruswami-Sudan decoding
algorithm in~\cite{GS}. The analysis of the second algorithm
in~\cite{pw_ieee} relies on a generalization of the footprint bound
from Gr\"{o}bner basis theory. As the first algorithm outperforms the
second, the details of the analysis of the second are not included
in the journal paper~\cite{pw_ieee} but can be found
in~\cite{pw_expanded}. To state the generalization of the footprint
bound we will need the following two lemmas corresponding
to~\cite[Lemma 2.4]{pw_expanded} respectively \cite[Lemma
2.5]{pw_expanded}.
\begin{Lemma}\label{lempw1}
Given a polynomial $F(\vec{X}) \in {\mathbf{F}}_q[\vec{X}]$ consider
the ideal $$I(q,r,m,F)= \langle F \rangle +\langle (X_1^q-X_1)^{e_1}
\cdots (X_m^q-X_m)^{e_m} \mid e_1+ \cdots e_m=r \rangle.$$ If $t$ is
the number of points in ${\mathbf{F}}_q^m$ where $F$ has at least
multiplicity $r$, then
$$\dim_{{\mathbf{F}}_q} {\mathbf{F}}_q[X_1, \ldots , X_m]/I(q,r,m,f)
\geq {m+r-1 \choose r-1} t.$$
\end{Lemma}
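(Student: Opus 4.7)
The plan is to exhibit a surjection from $\mathbb{F}_q[X_1,\ldots,X_m]/I(q,r,m,F)$ onto a direct product of local jet algebras, one for each of the $t$ prescribed points, each of which has $\mathbb{F}_q$-dimension exactly $\binom{m+r-1}{r-1}$.

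Let $\vec{a}_1,\ldots,\vec{a}_t$ be the points of $\mathbb{F}_q^m$ at which $F$ has multiplicity at least $r$, and let $\mathfrak{m}_i=\langle X_1-a_{i,1},\ldots,X_m-a_{i,m}\rangle$ be the corresponding maximal ideals. The first step is to show that $I(q,r,m,F)\subseteq \mathfrak{m}_i^r$ for every $i$. For the generator $F$ this is immediate from the alternative description of multiplicity given just before Proposition~\ref{pronuogsaa}: $\operatorname{mult}(F,\vec{a}_i)\geq r$ means $F\in J_r=\mathfrak{m}_i^r$. For the other generators, each factor $X_j^q-X_j$ vanishes at $\vec{a}_i$ (since $a_{i,j}\in\mathbb{F}_q$), hence lies in $\mathfrak{m}_i$, so $(X_j^q-X_j)^{e_j}\in\mathfrak{m}_i^{e_j}$ and the product $\prod_j(X_j^q-X_j)^{e_j}$ lies in $\mathfrak{m}_i^{e_1+\cdots+e_m}=\mathfrak{m}_i^{r}$.

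Next I would invoke the Chinese Remainder Theorem. The distinct maximal ideals $\mathfrak{m}_1,\ldots,\mathfrak{m}_t$ are pairwise comaximal, and so are their $r$-th powers (standard: if $\mathfrak{a}+\mathfrak{b}=R$ then $\mathfrak{a}^r+\mathfrak{b}^r=R$ for every $r$). Hence $\bigcap_{i=1}^t\mathfrak{m}_i^r=\prod_{i=1}^t\mathfrak{m}_i^r$ and
$$\mathbb{F}_q[\vec{X}]\Big/\bigcap_{i=1}^t\mathfrak{m}_i^r\;\cong\;\prod_{i=1}^t\mathbb{F}_q[\vec{X}]/\mathfrak{m}_i^r.$$
Each factor on the right is isomorphic (after translating $\vec{a}_i$ to the origin) to $\mathbb{F}_q[Y_1,\ldots,Y_m]/(Y_1,\ldots,Y_m)^r$, which has as $\mathbb{F}_q$-basis the monomials $Y_1^{p_1}\cdots Y_m^{p_m}$ with $p_1+\cdots+p_m<r$; their count is $\sum_{d=0}^{r-1}\binom{m+d-1}{m-1}=\binom{m+r-1}{r-1}$ by the hockey-stick identity. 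So the product has dimension $t\binom{m+r-1}{r-1}$.

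Finally, since $I(q,r,m,F)\subseteq\bigcap_i\mathfrak{m}_i^r$ by the first step, the natural surjection
$$\mathbb{F}_q[\vec{X}]/I(q,r,m,F)\;\twoheadrightarrow\;\mathbb{F}_q[\vec{X}]\Big/\bigcap_{i=1}^t\mathfrak{m}_i^r$$
yields $\dim_{\mathbb{F}_q}\mathbb{F}_q[\vec{X}]/I(q,r,m,F)\geq t\binom{m+r-1}{r-1}$, as required. The only mildly delicate point is the passage from comaximality of the $\mathfrak{m}_i$'s to comaximality of their $r$-th powers (so that the intersection equals the product and CRT applies), but this is entirely standard commutative algebra; everything else is bookkeeping of generators and a basis count.
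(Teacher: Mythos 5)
Your argument is correct and complete: the containment $I(q,r,m,F)\subseteq\mathfrak{m}_i^r$ for each of the $t$ points, the Chinese Remainder Theorem applied to the pairwise comaximal ideals $\mathfrak{m}_i^r$, and the count $\binom{m+r-1}{r-1}$ for each local factor together give exactly the claimed lower bound. Note that the paper itself offers no proof of this lemma --- it is quoted verbatim from \cite[Lemma 2.4]{pw_expanded} --- so there is nothing to compare against, but your CRT/local-jet argument is the standard route to this statement and I see no gaps in it.
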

\begin{Lemma}\label{lempw2}
Let $d$ be the total degree of $F(\vec{X}) \in {\mathbf{F}}_q[\vec{X}]$ and define $w=\lfloor d/ q \rfloor$. If $d < qr$ then an upper
bound for the dimension of 
$${\mathbf{F}}_q[\vec{X}]/I(q,r,m,f)$$ 
is given by
$${m+r-1 \choose m}q^m+(d-qw){m+r-w-2 \choose m-1}q^{m-1}-{m+r-w-1
  \choose m}q^m.$$
\end{Lemma}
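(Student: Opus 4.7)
The plan is to apply the footprint bound from Gr\"{o}bner basis theory. Fix any graded monomial ordering on ${\mathbf{F}}_q[X_1, \ldots, X_m]$. The ideal $I(q,r,m,F)$ is generated by $F$ (whose leading monomial has total degree $d$) together with the Macaulay-type products $\prod_{i=1}^m(X_i^q - X_i)^{e_i}$ over $e_1+\cdots+e_m=r$, whose leading monomials are $\prod_i X_i^{qe_i}$. Letting $\Delta$ be the set of monomials divisible by none of these leading monomials, the footprint bound yields $\dim_{{\mathbf{F}}_q} {\mathbf{F}}_q[\vec{X}]/I(q,r,m,F) \leq |\Delta|$. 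I write $|\Delta| = |\Delta_J| - N({\mbox{lm}}(F))$, where $\Delta_J = \{X_1^{a_1}\cdots X_m^{a_m} \mid \lfloor a_1/q\rfloor + \cdots + \lfloor a_m/q\rfloor \leq r-1\}$ is the footprint of the Macaulay part alone, and $N(M)$ counts the elements of $\Delta_J$ divisible by the monomial $M$. The standard substitution $a_i = qb_i + c_i$ with $0 \leq c_i < q$ gives $|\Delta_J| = \binom{m+r-1}{m} q^m$, which accounts for the first term of the claimed bound.

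Next I would evaluate $N$ in the extremal case ${\mbox{lm}}(F) = X_1^d$. Writing $d = qw + s$ with $0 \leq s < q$ and $a_1 = d + b_1$ with $b_1 \geq 0$, I split by $t = \lfloor (s+b_1)/q\rfloor$. The value $t = 0$ admits $q - s$ choices of $b_1$, while each $t \geq 1$ admits $q$; for fixed $t$, the residual constraint $\lfloor a_2/q\rfloor + \cdots + \lfloor a_m/q\rfloor \leq r - 1 - w - t$ on $(a_2,\ldots,a_m)$ contributes $\binom{m+r-w-t-2}{m-1} q^{m-1}$ options. Summing over $t$ via the hockey-stick identity yields $N(X_1^d) = \binom{m+r-w-1}{m} q^m - (d-qw)\binom{m+r-w-2}{m-1} q^{m-1}$, and then $|\Delta_J| - N(X_1^d)$ is exactly the expression claimed in the lemma.

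The main obstacle is to prove this bound uniformly: I would need to show $N(M) \geq N(X_1^d)$ for every monomial $M = X_1^{d_1}\cdots X_m^{d_m}$ of total degree $d$. Writing $d_i = q\alpha_i + \beta_i$ with $0 \leq \beta_i < q$ and expanding $\prod_i[(q-\beta_i) + \beta_i z] = \sum_t E_t z^t$ recasts $N(M)$ as $\sum_t \binom{m+r-1-(\sum_i\alpha_i)-t}{m}^{+} E_t$, where $(\cdot)^+$ denotes the positive part. The plan is to show that concentrating mass into a single variable---passing from $(\beta_1,\beta_2,0,\ldots,0)$ to $(\beta_1+\beta_2,0,\ldots,0)$, and analogously shifting units between the $\alpha_i$ and $\beta_i$ subject to $\beta_i < q$---can only decrease $N(M)$. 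The $\vec{\beta}$-concentration step follows from a direct calculation exploiting the second-difference identity $\binom{n}{m} - 2\binom{n-1}{m} + \binom{n-2}{m} = \binom{n-2}{m-2} \geq 0$. The $\vec{\alpha}$-step is where the work really lies, because lowering some $\alpha_i$ by $1$ and inflating the $\beta_j$'s to compensate has cascading effects constrained by $\beta_j < q$; a careful case analysis tracking how floors $\lfloor a_i/q\rfloor$ change under such shifts is needed. Iterating these moves reduces an arbitrary $M$ to $X_1^d$ and completes the proof of the stated uniform upper bound.
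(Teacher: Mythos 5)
First, a point of reference: the paper does not prove this statement at all --- Lemma~\ref{lempw2} is quoted verbatim from \cite[Lemma 2.5]{pw_expanded} as background for Appendix~\ref{seccomp}, so there is no in-paper proof to measure you against. Judged on its own terms, your setup is sound and your computations check out: the footprint of the Macaulay generators is exactly $\Delta_J=\{X_1^{a_1}\cdots X_m^{a_m} \mid \lfloor a_1/q\rfloor+\cdots+\lfloor a_m/q\rfloor\leq r-1\}$ with $|\Delta_J|=\binom{m+r-1}{m}q^m$, your evaluation of $N(X_1^d)$ via the split on $t=\lfloor(s+b_1)/q\rfloor$ and the hockey-stick identity is correct (using $\binom{m+r-w-1}{m}=\binom{m+r-w-2}{m}+\binom{m+r-w-2}{m-1}$ it agrees with the form you state), and the generating-function recast $N(M)=\sum_t\binom{m+r-1-\sum_i\alpha_i-t}{m}^{+}E_t$ is a correct identity (it follows from $\sum_{l\geq 0}\tilde{c}_i(l)z^l=(q-\beta_i+\beta_i z)/(1-z)$ per variable). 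So the lemma is correctly reduced to the single combinatorial claim $N(M)\geq N(X_1^d)$ for every monomial $M$ of total degree $d$.

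That claim, however, is the entire difficulty of the lemma, and you do not prove it. Your $\vec{\beta}$-concentration step is plausibly handled by the second-difference inequality you cite, but you explicitly concede that the $\vec{\alpha}$-step --- trading a unit of $\alpha_i$ for $q$ units spread among the $\beta_j$ subject to $\beta_j<q$, and tracking how the floors and the binomial weights respond --- ``is where the work really lies'' and requires ``a careful case analysis'' that you do not supply. Without it the chain of moves reducing an arbitrary $M$ to $X_1^d$ is not established, and it is not obvious that each individual move decreases $N$ (note that the weights $\binom{m+r-1-\sum_i\alpha_i-t}{m}^{+}$ change discontinuously when a term is truncated to zero, which is exactly where such monotonicity arguments tend to break). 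As it stands this is a correct reduction plus an honest statement of intent, not a proof; the extremality of the pure power $X_1^d$ must be demonstrated, or else quoted from \cite{pw_expanded} as the paper itself does.
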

Combining the two lemmas above we get the following result which is
used in~\cite{pw_expanded} without being stated explicitly. 
\begin{Proposition}\label{pwbound}
Let the notation be as in the above lemmas and assume $d < qr$. The number of points in ${\mathbf{F}}_q^m$ where $F$ has at least
multiplicity $r$ is at most equal to
\begin{eqnarray}
\Gamma_1(q,r,m,d)=\frac{{m+r-1 \choose m}q^m+(d-qw){m+r-w-2 \choose
    m-1}q^{m-1}-{m+r-w-1 \choose m}q^m}{{m+r-1 \choose r-1 }}.\nonumber
\end{eqnarray}
\end{Proposition}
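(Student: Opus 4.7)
The plan is essentially to chain Lemma~\ref{lempw1} and Lemma~\ref{lempw2} together; no substantive additional work is needed beyond recording the combination. Let $t$ denote the number of points in ${\mathbf{F}}_q^m$ at which $F$ has multiplicity at least $r$. First I would invoke Lemma~\ref{lempw1} to obtain the lower bound
\[
\binom{m+r-1}{r-1}\, t \;\leq\; \dim_{{\mathbf{F}}_q} {\mathbf{F}}_q[X_1,\ldots,X_m]/I(q,r,m,F).
\]
Next, since we are in the regime $d<qr$ where Lemma~\ref{lempw2} applies, I would apply that lemma to upper bound the same dimension by
\[
\binom{m+r-1}{m}q^m + (d-qw)\binom{m+r-w-2}{m-1}q^{m-1} - \binom{m+r-w-1}{m}q^m,
\]
with $w=\lfloor d/q\rfloor$ as in the statement.

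Stringing these two inequalities together gives
\[
\binom{m+r-1}{r-1}\, t \;\leq\; \binom{m+r-1}{m}q^m + (d-qw)\binom{m+r-w-2}{m-1}q^{m-1} - \binom{m+r-w-1}{m}q^m.
\]
Finally, since $\binom{m+r-1}{r-1}$ is a positive integer (here $r\ge 1$), I would divide both sides by it, which yields exactly $t \leq \Gamma_1(q,r,m,d)$ as desired.

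There is really no obstacle: the proposition is the formal conjunction of the two recalled lemmas, and the only thing worth checking is that the right-hand side of the combined inequality is nonnegative so that the division by the positive quantity $\binom{m+r-1}{r-1}$ preserves the inequality and the resulting bound is meaningful. This follows from Lemma~\ref{lempw2} itself, since that lemma bounds a dimension (which is nonnegative) from above. Hence the proof is a single line noting that the lemmas compose.
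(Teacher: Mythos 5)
Your proof is correct and matches the paper's approach exactly: the paper introduces this proposition with the single remark that it follows by ``combining the two lemmas above,'' which is precisely your chaining of Lemma~\ref{lempw1} with Lemma~\ref{lempw2} followed by division by the positive coefficient $\binom{m+r-1}{r-1}$. Nothing further is needed.
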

Augot and Stepanov~\cite{augot} gave another interpretation of
Pellikaan and Wu's second decoding algorithm in~\cite{pw_ieee} by
using Theorem~\ref{SZ-lemma} instead of
Proposition~\ref{pwbound}. Doing this they were able to correct much more errors which indicates
that the generalized Schwartz-Zippel bound is stronger than
Proposition~\ref{pwbound}. We here provide a direct proof of this
fact.
\begin{Proposition}
Let $\Gamma_2(q,r,m,d)=d q^{m-1}/r$, then
\begin{eqnarray}
\Gamma_1(q,r,m,d) \geq \Gamma_2(q,r,m,d) \nonumber
\end{eqnarray}
holds for all $d \in [ 0,rq-1]$.
\end{Proposition}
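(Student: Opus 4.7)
The plan is to parameterize $d = qw + s$ with $w = \lfloor d/q \rfloor \in \{0,1,\ldots,r-1\}$ and $s \in \{0,1,\ldots,q-1\}$, and to exploit the fact that for $w$ fixed, both $\Gamma_1$ and $\Gamma_2$ depend affinely on $s$: indeed $d$ enters linearly in both expressions. Consequently $\Gamma_1-\Gamma_2$, viewed as a function of the real variable $d$, is piecewise affine with possible kinks only at multiples of $q$, so the entire claim reduces to checking the inequality at the finitely many breakpoints $d = qw$.

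Before carrying out that reduction I would verify continuity at each breakpoint. A short computation based on Pascal's identity $\binom{m+r-w-1}{m} = \binom{m+r-w-2}{m} + \binom{m+r-w-2}{m-1}$ shows that extrapolating the $\Gamma_1$ formula with $w$ fixed to $s=q$ produces exactly the same value as the formula at $s=0$ with $w$ replaced by $w+1$. So $\Gamma_1 - \Gamma_2$ is actually affine on each slab $[qw,q(w+1)]$ and continuous across the breakpoints, and it suffices to check $\Gamma_1(q,r,m,qw) \geq \Gamma_2(q,r,m,qw)$ for $w \in \{0,1,\ldots,r\}$ (both $w=0$ and $w=r$ yield exact equality).

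At $d=qw$ the middle term of $\Gamma_1$ drops out and, using $\binom{m+r-1}{m} = \binom{m+r-1}{r-1}$, the inequality reduces to
$$\binom{m+r-1}{m} - \binom{m+r-w-1}{m} \geq \frac{w}{r}\binom{m+r-1}{m}.$$
Applying the hockey-stick identity $\binom{m+k}{m} = \sum_{j=0}^{k}\binom{m+j-1}{m-1}$ on both sides converts this to
$$\sum_{j=r-w}^{r-1}\binom{m+j-1}{m-1} \geq \frac{w}{r}\sum_{j=0}^{r-1}\binom{m+j-1}{m-1}.$$
Because $j \mapsto \binom{m+j-1}{m-1}$ is non-decreasing, the left-hand side is precisely the sum of the $w$ largest terms of the length-$r$ sequence on the right, which is at least $w$ times its average, namely the right-hand side.

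The step I expect to require the most care is the continuity verification at the breakpoints: getting the indexing right in the Pascal manipulation is easy to botch, and without this reduction one would be forced to verify the inequality separately at both endpoints $s=0$ and $s=q-1$ of every slab, which multiplies the bookkeeping. Once continuity is in hand, the remainder is a one-line application of the hockey-stick identity followed by the elementary observation that the sum of the $w$ largest terms of a monotone sequence is at least $w$ times its average.
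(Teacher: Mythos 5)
Your proof is correct, and it departs from the paper's argument at the decisive step. Both proofs begin with the same structural observation: $\Gamma_1$, viewed as a function of the real variable $d$, is continuous and piecewise affine with breakpoints at the multiples of $q$ (your Pascal-identity check of continuity is exactly what justifies the paper's unproved assertion that $\Gamma_1$ is \emph{continuously} piecewise linear). From there the paper takes a concavity shortcut: the slope on the $w$-th piece is proportional to $\binom{m+r-w-2}{m-1}$, which decreases in $w$, so $\Gamma_1$ is concave; since the linear function $\Gamma_2$ agrees with it at the two extreme endpoints $d=0$ and $d=rq$, it is a chord of a concave function and hence lies below it on all of $[0,rq]$. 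No interior breakpoint ever needs to be evaluated. You instead evaluate $\Gamma_1-\Gamma_2$ at every breakpoint $d=qw$ and prove nonnegativity there via the hockey-stick identity and the observation that the sum of the $w$ largest terms of a monotone length-$r$ sequence is at least $w/r$ of the total. The paper's route is shorter and needs only the monotonicity of a single binomial coefficient in $w$; yours costs a little more bookkeeping but is more self-contained and explicit, producing the exact value of the gap $\Gamma_1-\Gamma_2$ at each breakpoint rather than merely its sign, and it never needs the global concavity of $\Gamma_1$. Both arguments are complete and rigorous.
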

\begin{proof}
We consider $\Gamma_1$ and $\Gamma_2$ as
functions in $d$ on the interval $[0,rq]$. Our first observation is
that $\Gamma_1$ is a continuously piecewise linear function,
each piece corresponding to a particular value of $w$. The
corresponding $w$ slopes constitute a decreasing sequence. Combining this
observation with the fact that $\Gamma_2$  is linear in $d$ and with
the fact that
$$\Gamma_1(q,r,m,0)=\Gamma_2(q,r,m,0) {\mbox{ \ and  \ }}
\Gamma_1(q,r,m,rq)=\Gamma_2(q,r,m,rq)$$
proves the proposition.
\end{proof}

\end{document}